\tikzstyle{box} = [rectangle, rounded corners, minimum width=2cm, minimum height=1cm,text centered, draw=white, fill=white]
\tikzstyle{arrow} = [thick,->,>=stealth]
\newcommand{\al}{\alpha}
\newcommand{\be}{\beta}
\newcommand{\ga}{\gamma}
\newcommand{\Ga}{\Gamma}
\newcommand{\ra}{\rightarrow}
\newcommand{\Ra}{\Rightarrow}
\newcommand{\LR}{\Leftrightarrow}
\newcommand{\mrm}{\mathrm}
\newcommand{\mc}{\mathcal}
\newcommand{\mbf}{\mathbf}
\newcommand{\msf}{\mathsf}
\newcommand{\bsl}{\backslash}
\newcommand{\den}[1]{\llbracket{#1}\rrbracket}
\theoremstyle{thmstyleone}%
\newtheorem{Theorem}{Theorem}
\newtheorem{Lemma}{Lemma}
\newtheorem{Corollary}{Corollary}
\newtheorem{Proposition}[Theorem]{Proposition}%
\newtheorem{Property}[Theorem]{Property}% 
\theoremstyle{thmstylethree}%
\newtheorem{Definition}{Definition}%
\newtheorem{Remark}{Remark}%
\newtheorem{Example}{Example}%
\begin{document}

\title[$Inc\mc{RL}^-$: Logics and Decidability]{Involutive Commutative Residuated Lattice without Unit: Logics and Decidability}

%%=============================================================%%
%% Prefix	-> \pfx{Dr}
%% GivenName	-> \fnm{Joergen W.}
%% Particle	-> \spfx{van der} -> surname prefix
%% FamilyName	-> \sur{Ploeg}
%% Suffix	-> \sfx{IV}
%% NatureName	-> \tanm{Poet Laureate} -> Title after name
%% Degrees	-> \dgr{MSc, PhD}
%% \author*[1,2]{\pfx{Dr} \fnm{Joergen W.} \spfx{van der} \sur{Ploeg} \sfx{IV} \tanm{Poet Laureate} 
%%                 \dgr{MSc, PhD}}\email{iauthor@gmail.com}
%%=============================================================%%

\author[1]{\fnm{Yiheng} \sur{Wang}}\email{ianwang747@gmail.com}

\author[1]{\fnm{Hao} \sur{Zhan}}\email{haozhan1993@gmail.com}

\author[1]{\fnm{Yu} \sur{Peng}}\email{amberlogos@163.com}

\author*[1]{\fnm{Zhe} \sur{Lin}}\email{pennyshaq@163.com}

\affil*[1]{\orgdiv{Department of Philosophy}, \orgname{Xiamen University}, \orgaddress{\street{Siming South Road}, \city{Xiamen}, \postcode{361005}, \state{Fujian}, \country{China}}}

%\affil[2]{\orgdiv{Department}, \orgname{Organization}, \orgaddress{\street{Street}, \city{City}, \postcode{10587}, \state{State}, \country{Country}}}

%\affil[3]{\orgdiv{Department}, \orgname{Organization}, \orgaddress{\street{Street}, \city{City}, \postcode{610101}, \state{State}, \country{Country}}}

%%==================================%%
%% sample for unstructured abstract %%
%%==================================%%

\abstract{We investigate involutive commutative residuated lattices without unit, which are commutative residuated lattice-ordered semigroups enriched with a unary involutive negation operator. The logic of this structure is discussed and the Genzten-style sequent calculus of it is presented.  Moreover, we prove the decidability of this logic.}

\keywords{involutive negation; residuated lattice; unit; decidability}

%%\pacs[JEL Classification]{D8, H51}

%%\pacs[MSC Classification]{35A01, 65L10, 65L12, 65L20, 65L70}

\maketitle

\section{Introduction}

    The residuated lattice-ordered monoid, or residuated lattice for short, was first introduced by Ward, et al. \cite{krull1924axiomatische,dilworth1938abstract,ward1937residuation} in the early 20th century. A residuated lattice is a structure  $\msf{F}=(A,\land,\vee,\cdot,\bsl,/,1)$ such that $(A,\land,\vee)$ is a lattice, $(A,\cdot,\bsl,/,1)$ is a residuated monoid where $1$ is the unit for fusion ``$\cdot$'', and $\bsl,\cdot,/$ are binary operations satisfying the following law of residuation:
      \[(\mrm{res})\quad a\cdot b\le c\ \text{iff}\ b\le a\bsl c\ \text{iff}\ a\le c/b.\]
    The class of all residuated lattices will be denoted by  $\mc{RL}$. The study of $\mc{RL}$ can be traced back to the work of W. Krull \cite{krull1924axiomatische} and that of Morgan Ward and R. P. Dilworth \cite{ward1937residuation,dilworth1938abstract}. The theory has been studied in several branches, including lattice-ordered groups, substructural logics, and mathematical linguistics. The class of all residuated semigroups is denoted by $\mc{RSG}$. $\mc{RL}$ without the unit $1$ denoted by $\mc{RL^-}$ forms the residuated lattice-ordered semigroup. $\mc{RSG}$, $\mc{RL^-}$ and their non-associative variants are widely used as the algebraic base of categorial grammars for natural language processing (cf. \cite{lambek1958mathematics,kanazawa1992lambek,buszkowski2006lambek}). The commutative $\mc{RL}$ ( $\mc{RL^-}$), denoted by $c\mc{RL}$ ($c\mc{RL^-}$) is $\mc{RL}$ ($\mc{RL^-}$) additionally satisfying the commutativity for ``$\cdot$'' i.e. $a\cdot b=b\cdot a$. Consequently in  $c\mc{RL}$ ($c\mc{RL^-}$), $a\bsl b= b/a$.
    
    There are lots of literatures on $\mc{RL}$ enriched with an involutive negation (in) denoted by $In\mc{RL}$ (cf. \cite{galatos2004adding,jenei2012involutive}). An involutive negation over $\mc{RL}$ and $\mc{RL^-}$ is a unary operation satisfying the following two conditions: (in)  $a\bsl\neg b=\neg a/b$ and (dn) $\neg\neg a= a$ (cf. \cite{galatos2004adding}). Let $0=\neg 1$. Then $a\bsl 0=\neg a/ 1=\neg a$. Thus $\neg a =a \bsl 0 =0/ a$. Therefore the contraposition (ctr) follows: if $a \leq b$, then $\neg b\leq \neg a$. In  $c\mc{RL}$, (in) is equivalent to (cin): $a\bsl\neg b= b\bsl \neg a$. If one replaces (dn) by $a\leq \neg\neg a$, then one obtains the quasi-involutive negation. Hereafter we denote  $Inc\mc{RL}$ and $Inc\mc{RL^-}$ for involutive $c\mc{RL}$ and  $c\mc{RL^-}$ respectively. Evidently, quasi-involutive negation and involutive negation are minimal negation and De morgan negation respectively. De Morgan negation is a unary operation satisfying (ctr) and (dn) (cf. \cite{dunn1999comparative}). If one omits $\neg\neg a\leq a $ from a De Morgan negation, then one obtains the minimal negation. The concept of minimal negation was introduced by Kolmogoroff \cite{Kolmogoroff1927}, but it is systematically studied by Johansson \cite{Johansson1936}. De Morgan algebras (also called ``quasi-Boolean algebras''), which are (not necessarily bounded) distributive lattices with a De Morgan negation, were introduced by Bialynicki-Birula and Rasiowa \cite{bialynicki1957representation}. This type of algebra was also investigated by Moisil \cite{moisil1935recherches} under the term De Morgan lattices, and by Kalman \cite{kalman1958lattices} under the term distributive i-lattices. 

    Substructural logics are the logics whose algebraic models are residuated structures. The research of substructural logics and residuated lattices has been risen by Ono and other people (cf. \cite{schroeder1993substructural,restall2002introduction,buszkowski2006lambek,galatos2007residuated}) over the last four decades. The logic of $\mc{RL}$  under the name full Lambek calculus (FL), is a lattice extension of Lambek calculus (L) with the unit. Lambek calculus is the syntactic calculus introduced by Lambek \cite{lambek1958mathematics} for linguistics analysis. FL without unit denoted by FL$^-$, which happens to be the algebraic model of $\mc{RL^-}$ is introduced by Kanazawa \cite{kanazawa1992lambek}. Its distributive variants (DFL$^-$) are studied by Kozak \cite{kozak2009distributive}. In \cite{kozak2009distributive} the finite model property  and decidability results for DFL$^-$ are proved. Buszkowski considers their nonassociative variants and similar results are established in \cite{buszkowski2011interpolation}. 

    Negative extensions of substructural logics corresponding to  $\mc{RL}$ and its variants are considered in different literatures. $Inc\mc{RL}$ (also named FL$_e$-monoids) are studied by Jenei and Ono \cite{jenei2012involutive}. Galatos and Jipsen \cite{galatos2013residuated} show that $In\mc{RL}$ and its nonassociative variants have the finite model property. Cut-free sequent calculi for the corresponding algebras are presented in the same paper.  Buszkowski \cite{buszkowski2016classical} considers involutive residuated groupoids. The logic of these algebras is called classical nonassociative Lambek Calculus (CNL). Cut-free one side sequent calculus is presented and polynomial decidability is proved for CNL and CNL with the unit (CNL1).

    We continue this line of research. We consider (quasi) involutive negations over $c\mc{RL^-}$. $c\mc{RL^-}$ and $c\mc{RL}$ are essentially different. For instance, the inequation $a/(b/b)\le a$ holds in $c\mc{RL}$ but not in $c\mc{RL^-}$. Moreover $a\cdot \neg a\leq 0$ is a basic law in  $Inc\mc{RL}$. However, this expression lacks sense in $Inc\mc{RL^-}$ without $0$. Further up to the knowledge of the authors, the algebraic properties of $Inc\mc{RL^-}$ are not well investigated. And the logical characterization, cut-free sequent calculus, and the decidability  results of the corresponding logic remain open. Clearly, these results cannot be easily adapted from the ones for $Inc\mc{RL}$. In the present paper, we study $Inc\mc{RL^-}$ and its corresponding logic InFL$^-_e$. A sequent calculus for InFL$^-_e$ is presented and the decidability results are established. Our method is inspired by Lin et al. \cite{lin2021residuated}.

    This paper is organized as follows. Section \ref{section2} gives preliminaries of $c\mc{RL^-}$, $c\mc{RL^-}$ with (quasi) involutive negation and a brief discussion about the relation among different negations. Section \ref{section3} presents the sequent calculus of the corresponding logic, and the results about soundness, completeness, and embeddability. Section \ref{section4} shows a kind of cut-elimination and proves decidability results.
    
%%%%%%%%%%%%%%%%%%%%%%%%%%%%%%%%%%%%%%%%%%
\section{Algebra}\label{section2}

    In this section, we consider the commutative residuated lattice-ordered semigroup with an involutive negation operator. Let us recall some basic definitions first.

    \begin{Definition}
    An algebra $\msf{F}=(A,\land,\vee,\cdot,\bsl)$ is called a commutative residuated lattice-ordered semigroup (denoted by $c\mc{RL^-}$) if and only if:
        \begin{itemize}
            \item $(A,\land,\vee)$ is a lattice;
            \item $(A,\cdot)$ is a commutative semigroup;
            \item $\cdot,\bsl$ are operations on $A$ satisfying the following property:
                \begin{itemize}
                    \item[$\mrm{(res)}$] $a\cdot b\le c$ iff $b\le a\bsl c$.
                \end{itemize}
        \end{itemize}
    \end{Definition}

    \begin{Definition}
    An algebra $\msf{F}=(A,\land,\vee,\cdot,\bsl,1)$ is called a commutative residuated lattice (denoted by $c\mc{RL}$) where $(A,\cdot,1)$ is a commutative monoid.
    \end{Definition}

    \begin{Property}\label{propertylattice}
    The following properties related to the lattice operation hold in $c\mc{RL^-}$ and $c\mc{RL}$ for all $a,b,c\in A$:
        \begin{itemize}
            \item[(1)] If $a\leq b$, then $a \land c \leq b $;
            \item[(2)] If $a\leq b$, then $a\leq b\vee c$;
            \item[(3)] If $a\leq b$ and $c\leq b$, then $a\vee c\leq b$;
            \item[(4)] If $a\leq b$ and $a\leq c$, then $a \leq b\land c $;
            \item[(5)] If $a\leq b$ and $b\leq c$, then $a\leq c$.
        \end{itemize}
    \end{Property}

    \begin{Property}\label{propertyfusion}
    The following properties related to the fusion operation hold in $c\mc{RL^-}$ and $c\mc{RL}$ for all $a,b,c\in A$:
        \begin{itemize}
            \item[(1)] $a\cdot b=b\cdot a$;
            \item[(2)] $b\cdot(b\bsl a)\le a$;
            \item[(3)] If $a_1\le b_1$ and $a_2\le b_2$, then $a_1\cdot a_2\le b_1\cdot b_2$;
            \item[(4)] $a\cdot(b\vee c)=(a\cdot b)\vee(a\cdot c)$;
            \item[(5)] If $a\le b$, then $a\cdot c\le b\cdot c$;
            \item[(6)] $(a\bsl c)\cdot b\le a\bsl(c\cdot b)$;
            \item[(7)] $(a\bsl b)\cdot(b\bsl c)\le a\bsl c$;
            \item[(8)] $a\bsl c\le(b\cdot a)\bsl(b\cdot c)$;
            \item[(9)] $b\bsl(a\bsl c)=(a\cdot b)\bsl c$.
        \end{itemize}
    \end{Property}

    \begin{proof}
    We only show the proofs for (3)-(9).
        \begin{itemize}
            \item[(3)] Assume $a_1\le b_1$ and $a_2\le b_2$. Since one has $b_1\cdot a_2\le b_1\cdot a_2$ and $b_1\cdot b_2\le b_1\cdot b_2$, by multiple applications of (res) and the commutativity property, one has $a_1\cdot a_2\le b_1\cdot b_2$.
            \item[(4)] Since $b\le b\vee c$,  by (3) one has $a\cdot b\le a\cdot(b\vee c)$ and $a\cdot c\le a\cdot(b\vee c)$. Therefore, one has $(a\cdot b)\vee(a\cdot c)\le a\cdot(b\vee c)$. Next since $a\cdot b\le(a\cdot b)\vee(a\cdot c)$ and $a\cdot c\le(a\cdot b)\vee(a\cdot c)$, by (res) one has $b\vee c\le a\bsl((a\cdot b),(a\cdot c))$. Again by (res), one has $a\cdot(b\vee c)\le(a\cdot b)\vee(a\cdot c)$. Consequently, one has $a\cdot(b\vee c)=(a\cdot b)\vee(a\cdot c)$.
            \item[(5)] This is a special case of (3).
            \item[(6)] By (2), one has $a\cdot(a\bsl c)\le c$. By (5), one has $(a\cdot(a\bsl c))\cdot b\le c\cdot b$. Therefore, by (res) one has $(a\bsl c)\cdot b\le a\bsl(c\cdot b)$.
            \item[(7)] By application of (6), one has $(a\bsl b)\cdot(b\bsl c)\le a\bsl (b\cdot(b\bsl c))$. Further one has $a\cdot(a\bsl (b\cdot(b\bsl c)))\le c$ by (2). Therefore by (res), one has $(a\bsl b)\cdot(b\bsl c)\le a\bsl c$.
            \item[(8)] The proof is quite similar to (7).
            \item[(9)] Since one has $a\cdot(b\cdot(b\bsl(a\bsl c)))\le c$ and $(a\cdot b)\cdot((a\cdot b)\bsl c)\le c$, then by (res) one has $(a\cdot b)\bsl c\le b\bsl(a\bsl c)$ and $b\bsl(a\bsl c)\le(a\cdot b)\bsl c$. Consequently, one has $b\bsl(a\bsl c)=(a\cdot b)\bsl c$.
        \end{itemize}
    \end{proof}

    \begin{Property}\label{propertyresidual}
    The following properties related to the residual hold in $c\mc{RL^-}$ and $c\mc{RL}$ for all $a,b,c\in A$:
        \begin{itemize}
            \item[(1)] If $a\le b$, then $b\bsl c\le a\bsl c$;
            \item[(2)] If $a\le b$, then $c\bsl a\le c\bsl b$;
            \item[(3)] $c\bsl(a\land b)=(c\bsl a)\land(c\bsl b)$;
            \item[(4)] $b\bsl c\le(a\bsl b)\bsl(a\bsl c)$;
            \item[(5)] $b/a\le(c/b)\bsl(c/a)$ and $a\bsl b\le(a\bsl c)/(b\bsl c)$;
            \item[(6)] $a\bsl(c/b)=(a\bsl c)/b$;
            \item[(7)] $c\le(a/c)\bsl a$ and $c\le a/(c\bsl a)$;
            \item[(8)] $a\bsl b=b/a$.
        \end{itemize}
    \end{Property}

    \begin{proof}
    \ 
        \item[(1)] Assume $a\le b$, one has $b\cdot(b\bsl c)\le c$ by (res). Therefore, one has  $a\le(b\bsl c)\bsl c$. By twice application of (res), one has $b\bsl c\le a\bsl c$.
        \item[(2)] Assume $a\le b$, one has $c\cdot(c\bsl a)\le a$ by (res). Therefore, one has $c\cdot(c\bsl a)\le b$, again by (res) one has $c\bsl a\le c\bsl b$.
        \item[(3)] By application of (res) on $c\bsl(a\land b)\le c\bsl(a\land b)$, one has $c\cdot(c\bsl(a\land b))\le a\land b$. By lattice order one has $c\cdot(c\bsl(a\land b))\le a$, then by (res) one has $c\bsl(a\land b)\le c\bsl a$. Again by a similar method and lattice order one has $c\bsl(a\land b)\le(c\bsl a)\land(c\bsl b)$. For another direction, by application of (res) on $(c\bsl a)\land(c\bsl b)\le c\bsl a$, one has $c\cdot((c\bsl a)\land(c\bsl b))\le a$. Then by (res) again and a similar method one has $(c\bsl a)\land(c\bsl b)\le c\bsl(a\land b)$. Consequently, one has $c\bsl(a\land b)=(c\bsl a)\land(c\bsl b)$.
        \item[(4)] By Property \ref{propertyfusion} (7), one has $(a\bsl b)\cdot(b\bsl c)\le a\bsl c$. Further by application of (res) on $(a\bsl b)\cdot(b\bsl c)\le a\bsl c$, one has $b\bsl c\le (a\bsl b)\bsl(a\bsl c)$.
        \item[(5)] By Property \ref{propertyfusion} (7), one has $(c/b)\cdot(b/a)\le(c/a)$. Further by application of (res) on $(c/b)\cdot(b/a)\le(c/a)$, one has $b/a\le(c/b)\bsl(c/a)$. By similar method, one has $a\bsl b\le(a\bsl c)/(b\bsl c)$.
        \item[(6)] Since one has $(a\cdot(a\bsl(c/b)))\cdot b\le c$ and $(a\cdot((a\bsl c)/b))\cdot b\le c$, then by (res) one has $a\bsl(c/b)\le(a\bsl c)/b$ and $(a\bsl c)/b\le a\bsl(c/b)$. Consequently, one has $a\bsl(c/b)=(a\bsl c)/b$.
        \item[(7)] By applying (res) twice on $a/c\le a/c$, one has $c\le(a/c)\bsl a$. By a similar method, one has $c\le a/(c\bsl a)$.
        \item[(8)] Since $(a/b)\cdot b\le a$ and $b\cdot(b\bsl a)\le a$, by (res) one has $a\bsl b\le b/a$ and $b/a\le a\bsl b$, Consequently, one has $a\bsl b=b/a$.
    \end{proof}

    \begin{Definition}\label{algebra1}
    An algebra $\msf{F}=(A,\land,\vee,\cdot,\bsl,\neg)$ is called a commutative residuated lattice-ordered semigroup with involutive negation (denoted by $Inc\mc{RL^-}$) if and only if:
        \begin{itemize}
            \item $(A,\land,\vee,\cdot,\bsl)$ is a $c\mc{RL^-}$;
            \item $\neg$ is an operation on $A$ satisfying the following properties:
                \begin{itemize}
                   \item[$\mrm{(dn)}$]  $\neg\neg a=a$;
                   \item[$\mrm{(in)}$] $ a\bsl\neg b=b\bsl \neg a$.
                \end{itemize}
        \end{itemize}
    \end{Definition}
     $\msf{F}=(A,\land,\vee,\cdot,\bsl,\neg)$ is a commutative residuated lattice-ordered semigroup with  quasi-involutive  negation (denoted by $qInc\mc{RL^-}$) if it satisfies the condition (dn1) $a\leq \neg \neg a$ instead of (dn). In definition \ref{algebra1}, (in) can be replaced by the following equivalent condition:
        \begin{itemize}
            \item [$\quad$] 
                \begin{center}
                $\mrm{(smn)}$ If $a\cdot b\le\neg c$, then $c\cdot b\le\neg a$ where $b$ can be null.
                \end{center}
        \end{itemize}
    We show (in) implies (smn). Suppose that $a\cdot b \le \neg c$. Then by (res), one obtains $b\leq a\bsl \neg c$. Hence $b\leq c \bsl \neg a$. Therefore $b\cdot c\le \neg a$. Similarly, one can prove that if $c\cdot b\le\neg a$, then $a\cdot b\le\neg c$. Conversely, we show (smn) implies (in).  Since $a \bsl \neg b \le a \bsl \neg b$, by (res), $a \cdot a\bsl \neg b \le \neg b $. Then by (smn), one obtains $(a\bsl \neg b) \cdot (\neg\neg b)\le \neg a$. Thus by (dn), one gets $a \bsl \neg b \cdot b \le \neg a$.
    Therefore by (res), $a\bsl \neg b\le b\bsl \neg a$. The other inequation can be proved similarly. Obviously $c\mc{RL^-}$ enriched with a negation satisfying (smn) is indeed a $qInc\mc{RL^-}$. A  $c\mc{RL^-}$ with a negation satisfies the following (mn) instead of (in) and (dn) is called minimal $c\mc{RL^-}$, denoted by $mnc\mc{RL^-}$. 
    \begin{itemize}
        \item[$\quad$] 
            \begin{center}
            $\mrm{(mn)}$ If $a\le\neg c$, then $c\le\neg a$.
            \end{center}
        \end{itemize}
    Clearly (smn) implies (mn). Moreover by (mn), one obtains $a\leq \neg \neg a$. A $mnc\mc{RL^-}$ enriched with $\neg\neg a\leq a$ is a De morgan $c\mc{RL^-}$. One denoted it by $dnc\mc{RL^-}$. $c\mc{RL}$ enriched with an involutive negation ($Inc\mc{RL}$) is defined naturally.

    \begin{Proposition}
    $\neg a=a\bsl0$ and $a\cdot \neg a= 0$ in $Inc\mc{RL}$ where $0=\neg1$.
    \end{Proposition}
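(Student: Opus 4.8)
The plan is to prove the two equalities in turn, the second relying on the first. I work in $Inc\mc{RL}$, write $1$ for the monoid unit and set $0:=\neg 1$, and I use freely the unit law $1\bsl c=c$ (immediate from (res) and $1\cdot c=c$) together with contraposition (ctr), which makes $\neg$ an order-reversing bijection, so that $x\le y$ iff $\neg y\le\neg x$. For the first equality $\neg a=a\bsl 0$ the argument is a short chain: $a\bsl 0=a\bsl\neg 1$ by the definition of $0$; then (in), in the commutative form $a\bsl\neg b=b\bsl\neg a$ with $b=1$, gives $a\bsl\neg 1=1\bsl\neg a$; and the unit law gives $1\bsl\neg a=\neg a$. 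Hence $\neg a=a\bsl 0$.

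For the second equality $a\cdot\neg a=0$ I would proceed by antisymmetry. The inequality $a\cdot\neg a\le 0$ is the easy half: by the first equality $a\cdot\neg a=a\cdot(a\bsl 0)$, and $a\cdot(a\bsl 0)\le 0$ by Property \ref{propertyfusion}(2). For the reverse inequality $0\le a\cdot\neg a$ I would argue contrapositively. By (ctr) and $\neg 0=\neg\neg 1=1$ (using (dn)), the inequality $0\le a\cdot\neg a$ is equivalent to $\neg(a\cdot\neg a)\le 1$. Rewriting the left-hand side, the first equality gives $\neg(a\cdot\neg a)=(a\cdot\neg a)\bsl 0$, and Property \ref{propertyfusion}(9) turns this into $\neg a\bsl(a\bsl 0)=\neg a\bsl\neg a$. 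Since $\neg$ is onto by (dn), the reverse inequality is therefore equivalent to the uniform bound $b\bsl b\le 1$ for every $b\in A$.

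The step I expect to be the main obstacle is precisely this bound $b\bsl b\le 1$: its reverse $1\le b\bsl b$ is automatic from $b\cdot 1=b\le b$ and (res), so the entire difficulty is concentrated in preventing $b\bsl b$ from exceeding the unit, and this cannot be extracted from the purely residuated fragment of Section \ref{section2}, which only ever yields $1\le b\bsl b$; the involutive axioms must carry the remaining weight. My proposed route is to transfer the bound across the involution. Applying (ctr) and then (res) recasts $b\bsl b\le 1$ first as $0\le\neg(b\bsl b)$ and then as $(b\bsl b)\cdot 0\le 0$, and I would try to discharge the latter by pushing $0$ inward with Property \ref{propertyfusion}(6), namely $(b\bsl b)\cdot 0\le b\bsl(b\cdot 0)$, and then reading $b\cdot 0$ and $b\bsl(b\cdot 0)$ back through the identity $\neg b=b\bsl 0$ so that (dn) and (in) can close the estimate. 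Carrying this contraction-type bound through the involutive axioms is the delicate point; once $b\bsl b\le 1$ is secured, the reduction above yields $0\le a\cdot\neg a$, and antisymmetry with the easy half gives $a\cdot\neg a=0$.
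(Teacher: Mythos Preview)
Your argument for the first identity $\neg a=a\bsl 0$ is correct and in fact cleaner than the paper's: you use (in) with $b=1$ and the unit law directly, while the paper derives both inequalities separately via (smn) and (res). Your inequality $a\cdot\neg a\le 0$ is also fine and matches the paper.

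The genuine gap is in the reverse inequality $0\le a\cdot\neg a$, and your instinct that ``$b\bsl b\le 1$ is the delicate point'' is exactly right: that inequality is \emph{not} valid in $Inc\mc{RL}$, so no amount of manipulation with (in), (dn) and Property~\ref{propertyfusion} will close it. A concrete counterexample is the three-element Sugihara chain $S_3=\{-1,0,1\}$ with unit $0$, involution $\neg a=-a$, and fusion determined by $0\cdot x=x$, $1\cdot 1=1$, $(-1)\cdot x=-1$, $1\cdot(-1)=-1$. This is a commutative residuated lattice satisfying (dn) and (in), hence an $Inc\mc{RL}$; here $0=\neg 1_{\text{unit}}=\neg 0=0$, yet $1\cdot\neg 1=1\cdot(-1)=-1\ne 0$, and correspondingly $(-1)\bsl(-1)=1\not\le 0=1_{\text{unit}}$. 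So your reduction is correct and it exposes that the second claim of the proposition, as an equality, is false in general.

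This is not a defect peculiar to your approach: the paper's own proof also establishes only $a\cdot(a\bsl 0)\le 0$ (via (res) from $a\bsl 0\le a\bsl 0$) and then simply writes ``Since $a\cdot a\bsl 0=0$'' without justifying the reverse inequality. What actually holds in every $Inc\mc{RL}$---and what the paper's introduction itself states as the ``basic law''---is the inequality $a\cdot\neg a\le 0$; the proposition should be read with $\le$ in place of $=$ in the second clause, and your proof already covers that corrected statement completely.
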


    \begin{proof}
    For $Inc\mc{RL}$ contains unit, then $a\cdot 1=1\cdot a=a$. Since one has $\neg a\le\neg a$, by the unit one has $1\cdot\neg a\le\neg a$. By (smn), one has $a\cdot\neg a\le\neg1$. Then by (res), one has $\neg a\le a\bsl0$. For another direction, by applying (res) on $a\bsl0\le a\bsl0$, one has $a\cdot a\bsl0\le0$. Further by applying (smn), one has $a\cdot a\bsl0\le\neg1$ and $a\bsl0\le\neg a$. Therefore one has $\neg a=a\bsl0$ where $0=\neg1$. Since $a\cdot a\bsl 0 =0$, whence $a\cdot \neg a =0$.
    \end{proof}

    \begin{Example}\label{example1}
    The lattice in Figure \ref{figlattice} is an example of $Inc\mc{RL^-}$.
        \begin{figure}[htbp]
            \centering
                \begin{tikzpicture}
                    \fill(0,0) circle(2.5pt);
                    \fill(0,-2) circle(2.5pt);
                    \fill(-1,-1) circle(2.5pt);
                    \fill(1,-1) circle(2.5pt);
				\fill(0,-1) circle(2.5pt);
                    \draw (0,0) -- (-1,-1);
                    \draw (-1,-1) -- (0,-2);
                    \draw (0,-2) -- (1,-1);
                    \draw (1,-1) -- (0,0);
				\draw (0,-1) -- (0,0);
				\draw (0,-1) -- (0,-2);
                    \node[above] at (0,0) {$d$};
                    \node[left] at (-1,-1) {$a$};
                    \node[below] at (0,-2) {$c$};
                    \node[right] at (1,-1) {$b$};
				\node[right] at (0,-1) {$e$};
                \end{tikzpicture}
                
                \begin{tabular}{|c|c|c|c|c|c|}
                    \hline
                    $ $&$a$&$b$&$c$&$d$&$e$\\
                    \hline
                    $\neg$&$a$&$b$&$d$&$c$&$e$\\
                    \hline
                \end{tabular}
                
                \begin{tabular}{|c|c|c|c|c|c|}
                    \hline
                    $\cdot$&$a$&$b$&$c$&$d$&$e$\\
                    \hline
                    $a$&$b$&$a$&$c$&$d$&$d$\\
                    \hline
                    $b$&$a$&$b$&$c$&$d$&$d$\\
                    \hline
                    $c$&$c$&$c$&$c$&$c$&$c$\\
                    \hline
                    $d$&$d$&$d$&$c$&$d$&$d$\\
                    \hline
				$e$&$d$&$d$&$c$&$d$&$d$\\
                    \hline
                \end{tabular}
                $\quad\quad\ $
                \begin{tabular}{|c|c|c|c|c|c|}
                    \hline
                    $\bsl$&$a$&$b$&$c$&$d$&$e$\\
                    \hline
                    $a$&$b$&$a$&$c$&$d$&$c$\\
                    \hline
                    $b$&$a$&$b$&$c$&$d$&$c$\\
                    \hline
                    $c$&$d$&$d$&$d$&$d$&$d$\\
                    \hline
                    $d$&$c$&$c$&$c$&$d$&$c$\\
                    \hline
				$e$&$c$&$c$&$c$&$d$&$c$\\
                    \hline
                \end{tabular} 
            \caption{\centering{An example of $Inc\mc{RL^-}$}}
        \label{figlattice}
        \end{figure}
        
    \end{Example}
    Evidently, Example \ref{example1} is an instance of $Inc\mc{RL^-}$ but not even a subalgebra of an instance of $Inc\mc{RL}$. Since there is no special element $y\in\{a,b,c,d,e\}$ satisfying that for any $x\in\{a,b,c,d,e\}$, $x\cdot \neg x= y$. Therefore this example shows that $Inc\mc{RL^-}$ is essentially different with $Inc\mc{RL}$.

    \begin{Proposition}\label{propositionnegation}
    The following properties hold in $Inc\mc{RL^-}$ for all $a,b,c\in A$:
        \begin{itemize}
            \item[$\mrm{(mn)}$] If $a\le\neg c$, then $c\le\neg a$;
            \item[$\mrm{(dn1)}$] $a\le\neg\neg a$;
            \item[$\mrm{(smn')}$] If $a\cdot b\le c$, then $\neg c\cdot b\leq\neg a$;
            \item[$\mrm{(ctr)}$] If $a\le b$, then $\neg b\le\neg a$.
        \end{itemize}
    \end{Proposition}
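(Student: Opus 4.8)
The plan is to get all four items cheaply from two things already established above: the axiom $\mrm{(dn)}$, $\neg\neg a=a$, and the equivalence of $\mrm{(in)}$ with $\mrm{(smn)}$, i.e.\ the rule that $x\cdot b\le\neg z$ implies $z\cdot b\le\neg x$, where $b$ may be null. I would handle the items in the order $\mrm{(mn)}$, $\mrm{(dn1)}$, $\mrm{(smn')}$, $\mrm{(ctr)}$, each leaning on its predecessors. For $\mrm{(mn)}$: it is precisely the instance of $\mrm{(smn)}$ with $b$ null, so from $a\le\neg c$ one reads off $c\le\neg a$. For $\mrm{(dn1)}$: instantiate $\mrm{(mn)}$ with $\neg a$ in place of its ``$a$'' and $a$ in place of its ``$c$''; the (trivially true) premise $\neg a\le\neg a$ then delivers the conclusion $a\le\neg\neg a$. (It also follows at once from $\mrm{(dn)}$ as an equality; the $\mrm{(mn)}$ argument is worth recording because it survives without full double negation.)

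For $\mrm{(smn')}$: assume $a\cdot b\le c$. By $\mrm{(dn)}$ we may rewrite $c$ as $\neg\neg c$, so $a\cdot b\le\neg(\neg c)$; now apply $\mrm{(smn)}$ with the role of ``$z$'' played by $\neg c$, obtaining $\neg c\cdot b\le\neg a$. For $\mrm{(ctr)}$: take the instance of $\mrm{(smn')}$ with $b$ null, so $a\le c$ gives $\neg c\le\neg a$, and rename $c$ to $b$. (Alternatively, from $a\le b$ and $\mrm{(dn)}$ one has $\neg\neg a\le\neg(\neg b)$, and $\mrm{(mn)}$ yields $\neg b\le\neg\neg\neg a=\neg a$.)

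The only point that needs attention — rather than a real obstacle — is the bookkeeping around the ``$b$ may be null'' clause of $\mrm{(smn)}$ and the pattern-matching in its instantiations: one must verify that in each application the hypothesis genuinely has the shape $x\cdot b\le\neg z$, which it does once $c$ has been rewritten as $\neg\neg c$ via $\mrm{(dn)}$. Beyond Property \ref{propertyfusion} and the negation axioms, no further algebraic identity is required, so the whole proposition amounts to a handful of one-line derivations.
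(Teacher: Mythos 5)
Your proposal is correct and follows essentially the same route as the paper: $(\mrm{mn})$ as the null-$b$ instance of $(\mrm{smn})$, $(\mrm{dn1})$ from $(\mrm{mn})$ applied to $\neg a\le\neg a$, $(\mrm{smn'})$ by rewriting $c$ as $\neg\neg c$ via $(\mrm{dn})$ and applying $(\mrm{smn})$, and $(\mrm{ctr})$ from the earlier items (the paper uses your parenthetical alternative via $(\mrm{dn1})$ and $(\mrm{mn})$, but your primary null-$b$ instance of $(\mrm{smn'})$ is equally valid since the null clause is inherited from $(\mrm{smn})$).
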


    \begin{proof}
    \ 
        \begin{itemize}
            \item[$\mrm{(mn)}$] Let $b$ be null, one has this condition by (smn)'s simple substitution.
            \item[$\mrm{(dn1)}$] By applying (mn) on $\neg a\le\neg a$, one has $a\le\neg\neg a$.
            \item[$\mrm{(smn')}$] Assume $a\cdot b\le c$, one has $a\cdot b\le\neg\neg c$ by (dn). Further by applying (smn), one has $a\cdot b\le c$ implies $\neg c\cdot b\leq\neg a$.
            \item[$\mrm{(ctr)}$] Assume $a\le b$, by (dn1) one has $b\le\neg\neg b$. Therefore one has $a\le\neg\neg b$. By (mn) one has $\neg b\le\neg a$. 
        \end{itemize}
    \end{proof}

    A negation satisfying only the contraposition law (ctr) is called subminimal negation. Clearly (mn) implies (ctr). Let $a\leq b$. By (dn1), $b\leq \neg\neg b$. Thus $a\leq \neg\neg b$. Then by (mn), one obtains $\neg b\leq \neg a$. So a minimal negation is indeed a subminimal negation. As we have discussed above, a De morgan negation is a minimal negation satisfying $\neg\neg a \leq a$, while an involutive negation is a De morgan negation satisfying $a \bsl \neg b= b \bsl \neg a$. Therefore, following Dunn's Kite of Negations (cf. \cite{dunn1999comparative}), we present a string of negations over $c\mc{RL^-}$ in Figure \ref{figstring}. Note that L (Lambek calculus) with De morgan negation was introduced by Buszkowski \cite{buszkowski1995categorial} in order to describe the negative information of categorial grammar. However, the cut-free sequent calculus and the decidability problem of the corresponding logic still remain open.

    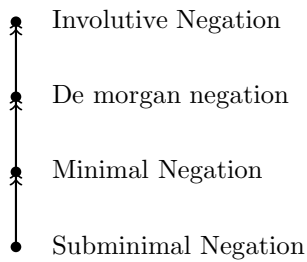
\begin{figure}[htbp]
        \centering
            \begin{tikzpicture}
                \fill(0,3) circle(2pt);
                \fill(0,2) circle(2pt);
                \fill(0,1) circle(2pt);
                \fill(0,0) circle(2pt);
                \draw [->>,thick] (0,0) -- (0,1);
                \draw [->>,thick] (0,1) -- (0,2);
                \draw [->>,thick] (0,2) -- (0,3);
                \node[right]at (0,0) {$\quad$Subminimal Negation};
                \node[right] at (0,1) {$\quad$Minimal Negation};
                \node[right] at (0,2) {$\quad$De morgan negation};
                \node[right] at (0,3) {$\quad$Involutive Negation};
            \end{tikzpicture}
        \caption{\centering{The string of negations}}
        \label{figstring}
    \end{figure}
 
\section{Logic}\label{section3}
    
    In this section, we are going to present InFL$^-_e$ (denoted by $\msf{G}$) and qInFL$^-_e$ which are the logics of $Inc\mc{RL^-}$ and $qInc\mc{RL^-}$ respectively. Simple sequent calculi for these algebras will be established. Soundness and Completeness results will be proved as well. Finally, we will show that InFL$^-_e$ can be embedded into qInFL$^-_e$.

    \begin{Definition}
    The set of formulas (terms) $\mc{F}$ for InFL$^-_e$ is defined inductively as follows:
    \[\mc{F}\ni \al::= p\mid\al\cdot\be \mid \al\bsl\be \mid\al\land\be \mid \al\vee\be \mid \neg\al\]
    where $p\in \mbf{Var}$. 
    \end{Definition}

    \begin{Definition}{\em
    The simple Gentzen-style sequent calculus $\msf{G}$ for commutative residuated lattice-ordered semigroup with involutive negation consists of the following axioms and rules:
        \begin{itemize}
            \item[$(1)$] Axioms:
                \[
                (\mrm{Id}) \al\Ra\al
                \quad
                (\mrm{DN2}) \neg\neg \alpha\Ra\alpha
                \]
            \item[$(2)$] Logical rules:
                \[
                \frac{\al\cdot\be \Rightarrow \gamma}{\be\Rightarrow \al\bsl\gamma}(\mrm{RES}\bsl)
                \quad
                \frac{\be\Rightarrow \al\bsl\gamma}{\al\cdot\be \Rightarrow \gamma}(\mrm{RES^-}\bsl)
                \]
                %\[
                %\frac{\al\cdot\be\Ra\ga}{\al\Ra\ga/\be}(\mrm{res}/)
                %\quad
                %\frac{\al\Ra\ga/\be}{\al\cdot\be\Ra\ga}(\mrm{res^-}/)
                %\]
                \[
                \frac{\al\Ra\be}{\al\land\gamma\Ra\be}{({\land}\mrm{L})}
                \quad
                \frac{\al\Ra\be \quad \al\Ra\gamma}{\al\Ra\be\land\gamma}{({\land}\mrm{R})}
                \]
                \[
                \frac{\al\Ra\be \quad \gamma\Ra\be}{\al\vee\gamma\Ra\be}{({\vee}\mrm{L})}
                \quad
                \frac{\al\Ra\be}{\al\Ra\be\vee\gamma}{({\vee}\mrm{R})}
                \]
                \[
                \frac{\al\cdot\be\Ra\neg\gamma}{\gamma\cdot\be\Ra\neg\al}{(\neg)}
                \]
            \item[(3)] Cut rule:
                \[
                \frac{\al\Ra \be \quad \be\Ra \gamma}{\al\Ra\gamma}{(\mrm{Cut})}
                \]
        \end{itemize}
    }
    \end{Definition}

    A sequent $\al\Ra \be$ is provable in $\msf{G}$, notation $\vdash_{\msf{G}}\al\Ra \be$, if there is a derivation of $\al\Ra \be$ in $\msf{G}$. We write $\vdash_{\msf{G}}\al\Leftrightarrow \be$ if $\vdash_{\msf{G}}\al\Ra \be$ and $\vdash_{\msf{G}}\be\Ra \al$. Since $\beta$ in the rule $(\neg)$ can be null i.e. rule $(\neg)$ can be the following special form:
        \[
        \frac{\alpha\Ra\neg\gamma}{\gamma\Ra\neg\alpha}{(\mrm{MN})}
        \]
    In addition, if one omits (DN2): $\neg\neg\al\Ra\al$, then we will get a simple sequent calculus for qInFL$^-_e$.

    \begin{Property}\label{property4}
    The following rules and axioms are admissible in $\msf{G}$:
        \begin{itemize}
            \item[$(\mrm{DN1})$] $\vdash_{\msf{G}}\alpha\Ra\neg\neg\alpha$;
            \item[$(\neg')$] If $\vdash_{\msf{G}}\alpha\cdot\beta\Ra\gamma$, then $\vdash_{\msf{G}}\neg\gamma\cdot\beta\Ra\neg\alpha$;
            \item[$\mrm{(DN\land)}$] $\vdash_{\msf{G}}\neg\neg(\neg\neg\al\land\neg\neg\be)\LR\neg\neg\al\land\neg\neg\be$;
            \item[$\mrm{(Mon)}$] If $\vdash_{\msf{G}}\al_1\Ra\be_1$ and $\al_2\Ra\be_2$, then $\vdash_{\msf{G}}\al_1\cdot\al_2\Ra\be_1\cdot\be_2$;
            \item[$\mrm{(DN\bsl)}$] $\vdash_{\msf{G}}\neg\neg(\neg\neg\al\bsl\neg\neg\be)\LR\neg\neg\al\bsl\neg\neg\be$.
        \end{itemize}
    \end{Property}

    \begin{proof}
    We only give the proofs of (Mon) and (DN$\bsl$).
        \begin{itemize}
            \item[(Mon)]:
                \begin{prooftree}
                    \AxiomC{$\al_1\Ra\be_1$}
                    \AxiomC{$\be_1\cdot\al_2\Ra\be_1\cdot\al_2$}
                    \RightLabel{$\mrm{(RES\bsl)}$}
                    \UnaryInfC{$\be_1\Ra\al_2\bsl(\be_1\cdot\al_2)$}
                    \RightLabel{(Cut)}
                    \BinaryInfC{$\al_1\Ra\al_2\bsl(\be_1\cdot\al_2)$}
                    \RightLabel{$\mrm{(RES^-\bsl)}$}
                    \UnaryInfC{$\al_1\cdot\al_2\Ra\be_1\cdot\al_2$}
                    \AxiomC{$\al_2\Ra\be_2$}
                    \AxiomC{$\be_1\cdot\be_2\Ra\be_1\cdot\be_2$}
                    \RightLabel{$\mrm{(RES\bsl)}$}
                    \UnaryInfC{$\be_2\Ra\be_1\bsl(\be_1\cdot\be_2)$}
                    \RightLabel{(Cut)}
                    \BinaryInfC{$\al_2\Ra\be_1\bsl(\be_1\cdot\be_2)$}
                    \RightLabel{$\mrm{(RES^-\bsl)}$}
                    \UnaryInfC{$\be_1\cdot\al_2\Ra\be_1\cdot\be_2$}
                    \RightLabel{(Cut)}
                    \BinaryInfC{$\al_1\cdot\al_2\Ra(\be_1\cdot\be_2)$}
                \end{prooftree}
            \item[$\mrm{(DN\bsl)}$]
            From right to left, it is a substitution of (DN1). From left to right:
            \begin{prooftree}
                \AxiomC{$\neg\neg\alpha\bsl\neg\neg\beta\Ra\neg\neg\alpha\bsl\neg\neg\beta$}
                \RightLabel{ $(\mrm{RES^-\bsl})$}
                \UnaryInfC{$\neg\neg\alpha\cdot(\neg\neg\alpha\bsl\neg\neg\beta) \Rightarrow\neg\neg\beta$}
                \RightLabel{ $(\neg')\times2$}
                \UnaryInfC{$\neg\neg\alpha\cdot\neg\neg(\neg\neg\alpha\bsl\neg\neg\beta) \Rightarrow\neg^4\beta$}
                \AxiomC{$\be\Ra\neg\neg\be$}
                \RightLabel{($\neg')\times2$}
                \UnaryInfC{$\neg^4\beta \Rightarrow\neg\neg\beta$}
                \RightLabel{ $(\mrm{Cut})$}
                \BinaryInfC{$\neg\neg\alpha\cdot\neg\neg(\neg\neg\alpha\bsl\neg\neg\beta) \Rightarrow\neg\neg\beta$}
                \RightLabel{ $(\mrm{RES\bsl})$}
                \UnaryInfC{$\neg\neg(\neg\neg\alpha\bsl\neg\neg\beta) \Rightarrow \neg\neg\alpha\bsl\neg\neg\beta$}
            \end{prooftree}
        \end{itemize}
    \end{proof}    

    Let ($A,\wedge,\vee,\cdot,\bsl,\neg$) be an $Inc\mc{RL^-}$. Let $\mbf{Var}$ be a set of all atomic formulas $p,q,r,\ldots$ and $\mu$ be a mapping from $\mbf{Var}$ to $A$. We write $\mu(p)$ as the corresponding element of $p$ in $A$. This mapping can be extended to the set of formulas $\mathcal{F}$ naturally by the following way: $\mu(\neg \alpha)=\neg(\mu(\alpha))$ and $\mu(\alpha\star\beta)=\mu(\alpha)\star\mu(\beta)$ for any $\alpha,\beta\in \mathcal{F}$ and $\star\in\{\wedge,\vee,\cdot,\bsl\}$. An $Inc\mc{RL^-}$ model is a pair ($\mathbf{A}, \mu)$ such that $\mathbf{A}$ is an $Inc\mc{RL^-}$ and $\mu$ is a mapping from $\mbf{Var}$ to $A$. A formula is true for ($\mathbf{A}, \mu)$ if $\mu(\alpha)=1$ in $\mathbf{A}$. A formula is valid for the class of $Inc\mc{RL^-}$s if $\mu(\alpha)=1$ for any $Inc\mc{RL^-}$ model ($\mathbf{A}, \mu)$. We write  $\vDash_{Inc\mc{RL^-}} \alpha$ if $\alpha$ is valid for the class of $Inc\mc{RL^-}$s.

    \begin{Definition}
    A simple sequent calculus $\msf{G}$ is called sound with respect to a class of algebras $\mc{K}$, if for any sequent $\alpha \Rightarrow \beta$, $\vdash_{\msf{G}}\alpha\Ra\beta$ implies $\vDash_\mc{K}\alpha\Ra\beta$. A simple sequent calculus $\msf{G}$ is called complete with respect to $\mc{K}$, if for any sequent $\alpha \Rightarrow \beta$, $\vDash_\mc{K}\alpha\Ra\beta$ implies $\vdash_{\msf{G}}\alpha\Ra\beta$.
    \end{Definition}

    \begin{Theorem}[Soundness and Completeness]\label{complete}
    $\msf{G}$ is sound and complete with respect to $Inc\mc{RL^-}$s.
    \end{Theorem}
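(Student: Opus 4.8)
The plan is to establish the two halves separately: soundness by induction on $\msf{G}$-derivations, and completeness by a Lindenbaum--Tarski construction. Throughout I read sequent validity $\vDash_{\mc{K}}\alpha\Rightarrow\beta$ as: $\mu(\alpha)\le\mu(\beta)$ in every model $(\mathbf{A},\mu)$ with $\mathbf{A}\in\mc{K}$ (this is the only sensible reading in the unit-free setting).

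For soundness I would show, by induction on the height of a derivation, that $\vdash_{\msf{G}}\alpha\Rightarrow\beta$ implies $\mu(\alpha)\le\mu(\beta)$ in every $Inc\mc{RL^-}$ model. The axioms are immediate: $(\mrm{Id})$ uses reflexivity of $\le$, and $(\mrm{DN2})$ uses $(\mrm{dn})$. For the inference rules, $(\mrm{RES}\bsl)$ and $(\mrm{RES^-}\bsl)$ are the two directions of $(\mrm{res})$; $(\land\mrm{L}),(\land\mrm{R}),(\vee\mrm{L}),(\vee\mrm{R})$ are the corresponding items of Property~\ref{propertylattice}; $(\mrm{Cut})$ is transitivity (Property~\ref{propertylattice}(5)); and $(\neg)$ is exactly the condition $(\mrm{smn})$ shown equivalent to $(\mrm{in})$ in the discussion following Definition~\ref{algebra1}. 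Since the homomorphic extension $\mu$ commutes with every connective, each rule application preserves the relevant inequality.

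For completeness I would form the Lindenbaum algebra of $\msf{G}$. Set $\alpha\preceq\beta$ iff $\vdash_{\msf{G}}\alpha\Rightarrow\beta$; this is a preorder by $(\mrm{Id})$ and $(\mrm{Cut})$. Let $\approx$ be the induced equivalence ($\alpha\approx\beta$ iff $\vdash_{\msf{G}}\alpha\Leftrightarrow\beta$) and put $\mathbf{L}=\mc{F}/{\approx}$ with $[\alpha]\star[\beta]=[\alpha\star\beta]$ for $\star\in\{\land,\vee,\cdot,\bsl\}$ and $\neg[\alpha]=[\neg\alpha]$. The first step is to check that $\approx$ is a congruence so that these are well defined: for $\cdot$ this is $(\mrm{Mon})$ of Property~\ref{property4}; for $\bsl,\land,\vee$ it follows from the respective rules together with $(\mrm{Cut})$ in the usual way; and for $\neg$ it follows from the derivable contraposition rule ``$\vdash_{\msf{G}}\alpha\Rightarrow\beta$ implies $\vdash_{\msf{G}}\neg\beta\Rightarrow\neg\alpha$'', obtained from $(\mrm{DN1})$, the null instance $(\mrm{MN})$ of $(\neg)$, and $(\mrm{Cut})$. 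The second and principal step is to verify that $\mathbf{L}$ is an $Inc\mc{RL^-}$: it is a lattice with $\land,\vee$ as meet and join (directly from the four lattice rules); $(\mrm{res})$ for $\cdot,\bsl$ holds (directly from $(\mrm{RES}\bsl)$/$(\mrm{RES^-}\bsl)$); $(\mrm{dn})$ holds (from $(\mrm{DN1})$ and $(\mrm{DN2})$); $(\mrm{in})$ holds, i.e.\ $\vdash_{\msf{G}}\alpha\bsl\neg\beta\Leftrightarrow\beta\bsl\neg\alpha$, which one derives by feeding an instance of $(\mrm{Id})$ through $(\mrm{RES^-}\bsl)$, then $(\neg)$, then $(\mrm{RES}\bsl)$; and $(\mathbf{L},\cdot)$ is a commutative semigroup. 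Granting all this, take the canonical valuation $\mu_0(p)=[p]$; a straightforward induction on formulas gives $\mu_0(\alpha)=[\alpha]$, so if $\vDash_{Inc\mc{RL^-}}\alpha\Rightarrow\beta$ then in particular $[\alpha]=\mu_0(\alpha)\le\mu_0(\beta)=[\beta]$ in $\mathbf{L}$, which by the definition of $\preceq$ is exactly $\vdash_{\msf{G}}\alpha\Rightarrow\beta$.

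The hard part is the last item of that verification: showing that fusion is provably commutative and associative, i.e.\ $\vdash_{\msf{G}}\alpha\cdot\beta\Leftrightarrow\beta\cdot\alpha$ and $\vdash_{\msf{G}}(\alpha\cdot\beta)\cdot\gamma\Leftrightarrow\alpha\cdot(\beta\cdot\gamma)$, since these are precisely the sequents needed for $(\mathbf{L},\cdot)$ to be a commutative semigroup and the residuation and lattice rules alone only give a one-sidedly residuated structure. They must be extracted from the negation rule $(\neg)$ together with $(\mrm{DN2})$; the natural tool is the provable ``flip'' $\vdash_{\msf{G}}\alpha\bsl\gamma\Leftrightarrow\neg\gamma\bsl\neg\alpha$ (a consequence of $(\mrm{in})$ and $(\mrm{dn})$) and its interaction with $(\mrm{RES}\bsl)$. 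If a direct derivation proves elusive, an alternative is to build the canonical model on the double-negation-closed formulas via a nucleus-style quotient, or to route through an auxiliary, manifestly complete multiset-style calculus and prove it equivalent to $\msf{G}$. Everything else is routine bookkeeping of the kind already carried out in the proof of Property~\ref{property4}.
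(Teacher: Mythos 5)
Your overall strategy coincides with the paper's: soundness by checking each axiom and rule against the algebraic facts recorded in Property~\ref{propertylattice}, Property~\ref{propertyfusion}, Property~\ref{propertyresidual} and Proposition~\ref{propositionnegation}, and completeness via the Lindenbaum--Tarski algebra of $\approx$-classes with the canonical valuation sending $p$ to its class. On those parts your write-up is correct and, if anything, more explicit than the paper about well-definedness of the quotient operations and about verifying $(\mrm{in})$ in the term model (your route through $(\mrm{RES^-}\bsl)$, then $(\neg)$, then $(\mrm{RES}\bsl)$ is exactly right).

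However, the step you yourself flag as ``the hard part'' is a genuine gap, and you do not close it: you never exhibit derivations of $\al\cdot\be\LR\be\cdot\al$ and $(\al\cdot\be)\cdot\ga\LR\al\cdot(\be\cdot\ga)$ in $\msf{G}$, you only list places one might look. Without these sequents the quotient is not known to carry a commutative semigroup structure, hence is not known to be an $Inc\mc{RL^-}$, and the completeness argument stalls. The difficulty is real: the only rule of $\msf{G}$ that rearranges the factors of a fusion is $(\neg)$, which keeps the second factor fixed and exchanges the first factor with the formula under the outer negation on the right. Together with residuation this yields precisely your ``flip'' $\al\bsl\ga\LR\neg\ga\bsl\neg\al$, but that identity acts on positions $1$ and $3$ and gives no evident way to realize the $1$--$2$ exchange needed for commutativity, nor any re-association; I do not see how to complete your sketch with the rules as printed. (The paper's own proof asserts ``Clearly $\mathbf{A}$ is an $Inc\mc{RL^-}$'' at exactly this point, so you have located a hole in the published argument rather than created a new one.) The honest repair is to add $\al\cdot\be\Ra\be\cdot\al$ and the two association sequents as axioms of $\msf{G}$ --- and, incidentally, the symmetric form of $(\land\mrm{L})$, which as printed cannot derive $\al\land\be\Ra\be$ --- all of which are valid, so soundness is unaffected; after that your Lindenbaum verification goes through routinely. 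Your fallback suggestions (a nucleus-style quotient, or a detour through a structural calculus) would each still need this point settled explicitly before the proof is complete.
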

    
    \begin{proof}
    The soundness is followed by the properties and propositions we have presented in the previous paragraphs. Rules related to lattice, fusion, residual, and negation are valid by Property \ref{propertylattice}, Property \ref{propertyfusion} and \ref{propertyresidual} and Proposition \ref{propositionnegation} respectively. For completeness, it suffices to show that for any sequent $\alpha \Ra \beta$, if $\not\vdash_{\msf{G}}\alpha\Ra \beta $, then $\not\vDash_{Inc\mc{RL^-}} \alpha \Ra \beta$. It can be proved by standard construction.
    Let $\den{\alpha}=\{\beta\mid\ \vdash_{\msf{G}}\alpha\Leftrightarrow\beta\}$. Let $A$ be the set of all $\den{\alpha}$. Define $ \wedge',\vee',\cdot',\bsl',\neg'$ on $A$ as follows:
    \[\den{\alpha_1}\wedge'\den{\alpha_2}=\den{\alpha_1\wedge\alpha_2}\quad\den{\alpha_1}\vee'\den{\alpha_2}=\den{\alpha_1\vee\alpha_2}
    \]
    \[
    \den{\alpha_1}\cdot'\den{\alpha_2}=\den{\alpha_1\cdot\alpha_2}\quad \den{\alpha_1}\bsl'\den{\alpha_2}=\den{\alpha_1\bsl\alpha_2}
    \]
    \[
    \neg'\den{\alpha}=\den{\neg\alpha}
    \]
    Clearly by Definition \ref{algebra1}, $\mbf{A}=(A,\wedge',\vee',\cdot',\bsl',\neg'$) is an $Inc\mc{RL^-}$. The lattice order is defined as $\den{\alpha_1}\leq' \den{\alpha_2}$ iff $\den{\alpha_1}\wedge'\den{\alpha_2}=\den{\alpha_1}$. Thus $\den{\alpha_1}\leq' \den{\alpha_2}$ iff $\vdash_{G}\alpha_1\Ra\alpha_2$. Define an assignment $\sigma:\mbf{Var}\longrightarrow A$ such that $\sigma(p)=\den{p}$. By induction on the complexity of the formula, one shows that $\widehat{\sigma}(\alpha)=\den{\alpha}$ for some formulas $\alpha$. Suppose that $\vDash_{Inc\mc{RL^-}} \alpha \Ra \beta$. Then $\widehat{\sigma}(\alpha)\leq \widehat{\sigma}(\beta)$ in $\mbf{A}$. Hence $\vdash_{G} \alpha \Ra \beta$, which yields a contradiction. This completes the proof.
    \end{proof}

    We define a new system that has been mentioned previously: the sequent calculus for commutative residuated lattice-ordered semigroup with quasi-involutive negation. This calculus is just $\msf{G}$ omitting $\neg\neg\al\Ra\al$, denoted by $\msf{qG}$. It is the intuitionistic counterpart of $\msf{G}$. Next, we shall present the Kolmogorov translation and show the embeddability result between $\msf{G}$ and $\msf{qG}$. Note that Galatos and Ono \cite{galatos2006glivenko} prove that for every involutive extension of FL, there exists a minimum extension of FL that contains the former via a double negation interpretation.

    \begin{Corollary}
    InFL$^-_e$ is sound and complete with respect to $Inc\mc{RL^-}$.
    \end{Corollary}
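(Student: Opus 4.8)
The plan is to observe that nothing genuinely new is required: the Corollary is a direct restatement of Theorem \ref{complete}. Indeed, InFL$^-_e$ was introduced at the start of this section precisely as the logic whose Gentzen-style sequent calculus is $\msf{G}$, so that $\vdash_{\mathrm{InFL}^-_e}\alpha\Ra\beta$ holds, by definition, exactly when $\vdash_{\msf{G}}\alpha\Ra\beta$. Consequently the provability relation of InFL$^-_e$ and that of $\msf{G}$ coincide, and the soundness/completeness of $\msf{G}$ with respect to the class of all $Inc\mc{RL^-}$s, already established in Theorem \ref{complete}, transfers verbatim to InFL$^-_e$.

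First I would make the identification $\vdash_{\mathrm{InFL}^-_e}\,=\,\vdash_{\msf{G}}$ explicit, and recall that, in the unit-free setting, ``$\alpha\Ra\beta$ valid in $Inc\mc{RL^-}$'' means $\widehat{\sigma}(\alpha)\le\widehat{\sigma}(\beta)$ in every $Inc\mc{RL^-}$ model --- this is the semantic counterpart of the deducibility relation of InFL$^-_e$. Then I would simply invoke the two halves of Theorem \ref{complete}: soundness is the implication $\vdash_{\msf{G}}\alpha\Ra\beta$ $\Rightarrow$ $\widehat{\sigma}(\alpha)\le\widehat{\sigma}(\beta)$ in all models, proved by induction on derivations using Property \ref{propertylattice}, Property \ref{propertyfusion}, Property \ref{propertyresidual} and Proposition \ref{propositionnegation}; completeness is witnessed by the Lindenbaum--Tarski algebra of $\equiv$-classes $\den{\alpha}$, which is an $Inc\mc{RL^-}$ by Definition \ref{algebra1}, together with the canonical assignment $\sigma(p)=\den{p}$.

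Since there is no mathematical gap between the Theorem and the Corollary, the only point that needs attention is bookkeeping: Theorem \ref{complete} is phrased for sequents, whereas the Corollary names the logic, and this is handled by the single translation step above. Accordingly, I expect no real obstacle here; the ``hard part'' is merely to state clearly that the two formulations describe the same pair consisting of the calculus $\msf{G}$ and the algebraic semantics over $Inc\mc{RL^-}$, after which the claim follows immediately.
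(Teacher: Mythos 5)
Your proposal is correct and matches the paper's (implicit) reasoning: the paper states this Corollary without proof because InFL$^-_e$ is by definition the logic presented by the calculus $\msf{G}$, so the claim is a direct restatement of Theorem \ref{complete}. Your explicit identification of $\vdash_{\mathrm{InFL}^-_e}$ with $\vdash_{\msf{G}}$ is exactly the bookkeeping step the paper leaves tacit.
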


    \begin{Definition}[Kolmogorov Translation]\label{Kolmogorov}
    The Kolmogorov translation is a function $ko:\mc{F}\ra\mc{F}$ defined inductively as follows:
        \begin{itemize}
            \item[(1)] $ko(\al)=\neg\neg\al$ if $\al$ is atomic;
            \item[(2)] $ko(\al\land\be)=\neg\neg(ko(\al)\land ko(\be))$;
            \item[(3)] $ko(\al\vee\be)=\neg\neg(ko(\al)\vee ko(\be))$;
            \item[(4)] $ko(\neg\al)=\neg ko(\al)$;
            \item[(5)] $ko(\al\cdot\be)=\neg\neg(ko(\al)\cdot ko(\be))$;
            \item[(6)] $ko(\al\bsl\be)=\neg\neg(ko(\al)\bsl ko(\be))$.
        \end{itemize}
    \end{Definition}

    \begin{Lemma}\label{lemma1}
    For every formula $\al$, $\vdash_{\msf{qG}}ko(\neg\neg\al)\Ra ko(\al)$.
    \end{Lemma}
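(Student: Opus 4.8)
The plan is to reduce the claim, via the definition of $ko$, to a single ``triple negation'' law, $\vdash_{\msf{qG}}\neg\neg\neg\delta\Ra\neg\delta$, which already holds in the quasi-involutive setting. First I would record two purely syntactic observations about the translation. By clause (4) of Definition \ref{Kolmogorov}, $ko(\neg\neg\al)=\neg\,ko(\neg\al)=\neg\neg\,ko(\al)$, so the target sequent is $\neg\neg\,ko(\al)\Ra ko(\al)$. Second, inspecting the six clauses of Definition \ref{Kolmogorov} shows that the outermost symbol of $ko(\al)$ is always $\neg$: an atom yields $\neg\neg p$; a formula whose main connective lies in $\{\land,\vee,\cdot,\bsl\}$ yields $\neg\neg(\cdots)$; and $ko(\neg\be)=\neg\,ko(\be)$. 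Hence $ko(\al)=\neg\delta$ for some formula $\delta$ depending on $\al$, and the goal becomes $\vdash_{\msf{qG}}\neg\neg\neg\delta\Ra\neg\delta$.

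Next I would prove this triple negation law for an arbitrary formula $\delta$. The two ingredients are (DN1), $\vdash_{\msf{qG}}\delta\Ra\neg\neg\delta$, and contraposition (ctr): if $\vdash_{\msf{qG}}\epsilon\Ra\zeta$ then $\vdash_{\msf{qG}}\neg\zeta\Ra\neg\epsilon$. Both are available in $\msf{qG}$. For (DN1): apply the null-context instance (MN) of rule $(\neg)$ to the axiom (Id) $\neg\delta\Ra\neg\delta$; this is exactly the derivation behind Property \ref{property4}, and it does not use the axiom (DN2). For (ctr): from a premise $\epsilon\Ra\zeta$, compose it by (Cut) with the (DN1)-instance $\zeta\Ra\neg\neg\zeta$ to get $\epsilon\Ra\neg\neg\zeta$, then apply (MN) to obtain $\neg\zeta\Ra\neg\epsilon$. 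Applying (ctr) to the (DN1)-instance $\delta\Ra\neg\neg\delta$ immediately yields $\neg\neg\neg\delta\Ra\neg\delta$; combined with the reductions of the previous paragraph, $\vdash_{\msf{qG}}ko(\neg\neg\al)\Ra ko(\al)$, which is the lemma.

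I do not expect a serious obstacle; the one point that needs care is checking that every tool invoked — (DN1), (MN), (ctr) — is genuinely derivable in $\msf{qG}$, i.e.\ without appealing to (DN2) $\neg\neg\al\Ra\al$, since that axiom is precisely what separates $\msf{G}$ from $\msf{qG}$. As indicated above, each has a derivation using only (Id), (Cut), and rule $(\neg)$, so the whole argument stays inside $\msf{qG}$. (A more pedestrian alternative would run a direct induction on $\al$, proving $\vdash_{\msf{qG}}ko(\neg\neg\al)\LR\neg\neg\,ko(\al)$ together with the claim, but the observation that every $ko(\al)$ is $\neg$-headed makes that detour unnecessary.)
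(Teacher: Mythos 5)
Your proof is correct and follows essentially the same route as the paper's: reduce via $ko(\neg\neg\al)=\neg\neg\,ko(\al)$ and the observation that $ko(\al)$ is always of the form $\neg\delta$ to the triple-negation law $\neg\neg\neg\delta\Ra\neg\delta$, which the paper likewise obtains from (DN1) and (MN). You merely spell out more explicitly (via the derived contraposition rule) why that law is available in $\msf{qG}$ without appealing to (DN2), which is a worthwhile check but not a different argument.
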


    \begin{proof}
    one has $ko(\neg\neg\al)=\neg\neg ko(\al)$ by Definition \ref{Kolmogorov}. Let $ko(\al)=\neg\be$ for some formula $\be$. By (DN1) and (MN) one has $\vdash_{\msf{G}}\neg\neg\neg\be\Ra\neg\be$. Therefore, one has $\vdash_{\msf{G}}ko(\neg\neg\al)\Ra ko(\al)$.
    \end{proof}

    \begin{Lemma}\label{embeddability}
    $\vdash_{\msf{qG}}ko(\al)\Ra ko(\be)$ iff $\vdash_{\msf{G}}\al\Ra\be$ where $ko()$ is the Kolmogorov translation.
    \end{Lemma}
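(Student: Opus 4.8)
The plan is to prove the two directions of the biconditional separately. The implication $\vdash_{\msf{G}}\al\Ra\be\implies\vdash_{\msf{qG}}ko(\al)\Ra ko(\be)$ I would prove by induction on the derivation of $\al\Ra\be$ in $\msf{G}$, showing that the $ko$-image of every axiom and every rule instance is derivable in $\msf{qG}$. The axiom $(\mrm{Id})$ maps to an instance of $(\mrm{Id})$; the axiom $(\mrm{DN2})$, i.e.\ $\neg\neg\al\Ra\al$, maps to $ko(\neg\neg\al)\Ra ko(\al)$, which is exactly Lemma \ref{lemma1}; and $(\mrm{Cut})$ maps to $(\mrm{Cut})$. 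For the remaining rules the work lies in inserting and deleting the wrappers $\neg\neg(\cdot)$ that $ko$ places around $\cdot,\bsl,\land,\vee$. Two observations make this routine: first, the admissible principles of Property \ref{property4} --- in particular $(\mrm{DN1})$, $(\neg')$, $(\mrm{Mon})$ and $(\mrm{DN\bsl})$ --- are already admissible in $\msf{qG}$, since the derivations given in the excerpt never invoke $(\mrm{DN2})$; second, every translated formula $ko(\phi)$ has the form $\neg\psi$, so that $\vdash_{\msf{qG}}\neg\neg ko(\phi)\Ra ko(\phi)$, equivalently $\vdash_{\msf{qG}}\neg\neg\neg\psi\Ra\neg\psi$, follows from $(\mrm{DN1})$ and $(\neg')$.

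The representative cases are as follows. For $(\mrm{RES}\bsl)$ one peels the leading $\neg\neg$ off the translated premise using $(\mrm{DN1})$ and $(\mrm{Cut})$, applies $(\mrm{RES}\bsl)$ inside $\msf{qG}$, and re-wraps with $(\mrm{DN1})$; $(\mrm{RES^-}\bsl)$ is analogous but additionally uses $(\mrm{Mon})$ to restore $ko(\al)$ on the left and $(\mrm{DN\bsl})$ --- applicable because each $ko(\phi)$ is in fact double-negation-headed --- to collapse $\neg\neg(ko(\al)\bsl ko(\ga))$ to $ko(\al)\bsl ko(\ga)$. The lattice rules follow from the corresponding $\msf{qG}$-rule together with monotonicity of $\neg\neg$ (two applications of $(\neg')$) and $\vdash_{\msf{qG}}\neg\neg ko(\phi)\Ra ko(\phi)$. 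For $(\neg)$ one peels off $\neg\neg$ via $(\mrm{DN1})$, applies $(\neg)$ in $\msf{qG}$, re-wraps the conclusion with $(\neg')$ twice, and absorbs the resulting $\neg\neg\neg ko(\al)$ back to $\neg ko(\al)$, once more using that $ko(\al)$ is negation-headed.

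The converse implication $\vdash_{\msf{qG}}ko(\al)\Ra ko(\be)\implies\vdash_{\msf{G}}\al\Ra\be$ is easier. Every axiom and rule of $\msf{qG}$ is an axiom or rule of $\msf{G}$, so $\vdash_{\msf{qG}}ko(\al)\Ra ko(\be)$ immediately yields $\vdash_{\msf{G}}ko(\al)\Ra ko(\be)$. It therefore suffices to prove, by induction on the complexity of $\phi$, that $\vdash_{\msf{G}}\phi\LR ko(\phi)$ for every formula $\phi$: the atomic case is $(\mrm{DN1})$ together with $(\mrm{DN2})$, and each inductive step combines the inductive hypothesis with $(\mrm{Mon})$, the lattice rules, congruence for the connectives, and the collapse $\vdash_{\msf{G}}\neg\neg\ga\LR\ga$ (which holds in $\msf{G}$ by $(\mrm{DN1})$ and $(\mrm{DN2})$). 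Two applications of $(\mrm{Cut})$ against $\vdash_{\msf{G}}\al\LR ko(\al)$ and $\vdash_{\msf{G}}\be\LR ko(\be)$ then give $\vdash_{\msf{G}}\al\Ra\be$.

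The main obstacle is the first direction: one must verify that each admissible principle used to shuffle double negations genuinely survives the removal of $(\mrm{DN2})$, and that the one-directional collapse $\vdash_{\msf{qG}}\neg\neg\neg\ga\LR\neg\ga$ --- available only because all translated formulas are negation-headed --- really does neutralise every spurious negation pair the translation produces. No single step is deep, but the case analysis over the rules of $\msf{G}$ must be exhaustive, and it is precisely here that the asymmetry between $\msf{G}$ and $\msf{qG}$ does the work.
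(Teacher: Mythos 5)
Your proposal is correct and follows essentially the same route as the paper: the hard direction by induction on the $\msf{G}$-derivation, absorbing the translation's double negations via the admissible principles of Property \ref{property4} (all of whose derivations indeed avoid $(\mrm{DN2})$) and the fact that every $ko(\phi)$ is negation-headed so that Lemma \ref{lemma1} applies; the easy direction via $\msf{qG}\subseteq\msf{G}$ and $\vdash_{\msf{G}}\phi\LR ko(\phi)$. Your write-up merely makes explicit two points the paper leaves implicit, namely the $\msf{qG}$-admissibility check and the induction behind ``double negation holds in $\msf{G}$.''
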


    \begin{proof}
    From left to right: Assume $\vdash_{\msf{qG}}ko(\al)\Ra ko(\be)$, since $\msf{G}$ is an extension of $\msf{qG}$, one has $\vdash_{\msf{G}}ko(\al)\Ra ko(\be)$. Since double negation holds in $\msf{G}$, one has $\vdash_{\msf{G}}\al\Ra\be$.

    From right to left: Assume $\vdash_{\msf{G}}\al\Ra\be$, we prove this direction by induction on the length of derivation n. Suppose n=0, then $\al\Ra\be$ is an instance of (Id). Then one has $\al=\be$. Suppose $\al=\neg\neg\al'$, one has $\vdash_{\msf{qG}}ko(\neg\neg\al')\Ra ko(\al')$ by Lemma \ref{lemma1}. Thus the claim $\vdash_{\msf{qG}}ko(\al)\Ra ko(\be)$ holds for n=0. Suppose n>0, assume sequent $\al\Ra\be$ is obtained by an arbitrary rule (R), one has the following cases:
        \begin{itemize}
            \item[(1)] (R) is (Cut). Let the premises be $\vdash_{\msf{G}}\al\Ra\ga$ and $\vdash_{\msf{G}}\ga\Ra\be$. By induction hypothesis, one has $\vdash_{\msf{qG}}ko(\al)\Ra ko(\ga)$ and $\vdash_{\msf{qG}}ko(\ga)\Ra ko(\be)$. By (Cut) one has $\vdash_{\msf{qG}}ko(\al)\Ra ko(\be)$.
            
            \item[(2)] (R) is (RES$\bsl$). Let the premise be $\vdash_{\msf{G}}\al\cdot\be\Ra\ga$. By induction hypothesis, one has $\vdash_{\msf{qG}}ko(\al\cdot\be)\Ra ko(\ga)$. By (DN1), one has $\vdash_{\msf{qG}}ko(\al)\cdot ko(\be)\Ra\neg\neg(ko(\al)\cdot ko(\be))$. By applying (Cut), one has $\vdash_{\msf{qG}}ko(\al)\cdot ko(\be)\Ra ko(\ga)$. Next by (RES$\bsl$) one has $\vdash_{\msf{qG}}ko(\be)\Ra ko(\al)\bsl ko(\ga)$. Since one has $\vdash_{\msf{qG}}ko(\al)\bsl ko(\ga)\Ra\neg\neg(ko(\al)\bsl ko(\ga))$ by (DN1), one gets $\vdash_{\msf{qG}}ko(\be)\Ra\neg\neg(ko(\al)\bsl ko(\ga))$ by (Cut). Therefore, one has $\vdash_{\msf{qG}}ko(\be)\Ra ko(\al\bsl\ga)$.
            
            \item[(3)] (R) is (RES$^-\bsl$). Let the premise be $\vdash_{\msf{G}}\be\Ra\al\bsl\ga$. By induction hypothesis, one has $\vdash_{\msf{qG}}ko(\be)\Ra ko(\al\bsl\ga)$. By applying (DN$\bsl$) in Property \ref{property4}, one has $\vdash_{\msf{qG}}\neg\neg(ko(\al)\bsl ko(\ga))\LR ko(\al)\bsl ko(\ga)$. By applying (RES$^-\bsl$) and (Cut), one has $\vdash_{\msf{qG}}ko(\al)\cdot ko(\be)\Ra ko(\ga)$. By applying ($\neg'$), one has $\vdash_{\msf{qG}}\neg\neg(ko(\al)\cdot ko(\be))\Ra\neg\neg ko(\ga)$. Since $\neg\neg ko(\ga)=ko(\neg\neg\ga)$, one has $\vdash_{\msf{qG}}\neg\neg(ko(\al)\cdot ko(\be))\Ra ko(\ga)$ by applying (Cut) and Lemma \ref{lemma1}. Therefore, one has $\vdash_{\msf{qG}}ko(\al\cdot\be)\Ra ko(\ga)$.

            \item[(4)] (R) is ($\land$L). Let the premise be $\vdash_{\msf{G}}\al\Ra\be$. By induction hypothesis, one has $\vdash_{\msf{qG}}ko(\al)\Ra ko(\be)$. By applying ($\land$L), one has $\vdash_{\msf{qG}}ko(\al)\land ko(\ga)\Ra ko(\be)$. By applying (DN$\land$) in Property \ref{property4}, one has $\vdash_{\msf{qG}}\neg\neg(ko(\al)\land ko(\ga))\Ra ko(\be)$. Therefore, one has $\vdash_{\msf{qG}}ko(\al\land\ga)\Ra ko(\be)$.

            \item[(5)] (R) is ($\land$R). Let the premises be $\vdash_{\msf{G}}\al\Ra\be$ and $\vdash_{\msf{G}}\al\Ra\ga$. By induction hypothesis, one has $\vdash_{\msf{qG}}ko(\al)\Ra ko(\be)$ and $\vdash_{\msf{qG}}ko(\al)\Ra ko(\ga)$. By ($\land$R) and (DN1) one has $\vdash_{\msf{qG}}ko(\al)\Ra\neg\neg(ko(\be)\land ko(\ga))$. Therefore, one has $\vdash_{\msf{qG}}ko(\al)\Ra ko(\be\land\ga))$.

            \item[(6)] (R) is ($\vee$L). Let the premises be $\vdash_{\msf{G}}\al\Ra\be$ and $\vdash_{\msf{G}}\ga\Ra\be$. By induction hypothesis, one has $\vdash_{\msf{qG}}ko(\al)\Ra ko(\be)$ and $\vdash_{\msf{qG}}ko(\ga)\Ra ko(\be)$. By applying ($\vee$L), one has $\vdash_{\msf{qG}}ko(\al)\vee ko(\ga)\Ra ko(\be)$. By applying ($\neg'$), one has $\vdash_{\msf{qG}}\neg\neg(ko(\al)\vee ko(\ga))\Ra\neg\neg ko(\be)$. Since $\neg\neg ko(\be)=ko(\neg\neg\be)$, one has $\vdash_{\msf{qG}}\neg\neg(ko(\al)\vee ko(\ga))\Ra ko(\be)$ by applying (Cut) and Lemma \ref{lemma1}. Therefore, one has $\vdash_{\msf{qG}}ko(\al\vee\ga)\Ra ko(\be)$.

            \item[(7)] (R) is ($\vee$R). Let the premise be $\vdash_{\msf{G}}\al\Ra\be$. By induction hypothesis, one has $\vdash_{\msf{qG}}ko(\al)\Ra ko(\be)$. By ($\vee$R) and (DN1) one has $\vdash_{\msf{qG}}ko(\al)\Ra\neg\neg(ko(\be)\vee ko(\ga))$. Therefore, one has $\vdash_{\msf{qG}}ko(\al)\Ra ko(\be\vee\ga)$.

            \item[(8)] (R) is ($\neg$). Let the premise be $\vdash_{\msf{G}}\al\cdot\be\Ra\neg\ga$. By induction hypothesis, one has $\vdash_{\msf{qG}}ko(\al\cdot\be)\Ra ko(\neg\ga)$. By applying (Cut), one has $\vdash_{\msf{qG}}ko(\al)\cdot ko(\be)\Ra\neg ko(\ga)$. By applying ($\neg$) one has $\vdash_{\msf{qG}}ko(\ga)\cdot ko(\be)\Ra\neg ko(\al)$. Further by (MN) and ($\neg'$), one has $\vdash_{\msf{qG}}\neg\neg(ko(\ga)\cdot ko(\be))\Ra\neg ko(\al)$. Therefore, one has $\vdash_{\msf{qG}}ko(\ga\cdot\be)\Ra ko(\neg\al)$.
        \end{itemize}
    \end{proof}

    \begin{Theorem}[Embeddability]\label{thmembeddability}
    $\msf{G}$ can be embedded into $\msf{qG}$.
    \end{Theorem}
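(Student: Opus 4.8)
The plan is to obtain the theorem directly from Lemma \ref{embeddability}, which already carries out all the work. The embedding is the Kolmogorov translation $ko\colon\mc{F}\ra\mc{F}$ of Definition \ref{Kolmogorov}, lifted to sequents by setting $ko(\al\Ra\be):=ko(\al)\Ra ko(\be)$. Lemma \ref{embeddability} asserts precisely that $\vdash_{\msf{qG}}ko(\al)\Ra ko(\be)$ iff $\vdash_{\msf{G}}\al\Ra\be$, and this biconditional is exactly the faithfulness requirement for $ko$ to embed $\msf{G}$ into $\msf{qG}$: a sequent is provable in $\msf{G}$ if and only if its image is provable in $\msf{qG}$, so provability is neither destroyed nor manufactured by the translation.

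First I would recall that $\msf{qG}$ is $\msf{G}$ with the axiom (DN2) deleted, so every $\msf{qG}$-derivation is already a $\msf{G}$-derivation; combined with the fact that $\al$ and $ko(\al)$ are interderivable in $\msf{G}$ (using double-negation elimination together with the admissible (DN1)), this yields the left-to-right half of the lemma at once. The right-to-left half --- the substantive direction --- proceeds by induction on the length of a $\msf{G}$-derivation, and was carried out in the proof of Lemma \ref{embeddability} by using the admissible rules ($\neg'$), (DN$\land$), (DN$\bsl$), (Mon) of Property \ref{property4} and Lemma \ref{lemma1} to commute the extra double negations inserted by $ko$ past each rule of $\msf{G}$.

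Then I would simply conclude: the equivalence in Lemma \ref{embeddability} \emph{is} the statement that $ko$ embeds $\msf{G}$ into $\msf{qG}$. If one additionally wants $ko$ to descend to an injective, structure-preserving map between the Lindenbaum--Tarski algebras of the two calculi, I would note that Lemma \ref{embeddability} supplies this too: if $\vdash_{\msf{G}}\al\LR\al'$ then $\vdash_{\msf{qG}}ko(\al)\LR ko(\al')$, and conversely, so $ko$ is well defined and injective on $\LR$-classes, and it respects the operations up to the double-negation wrappers by construction. There is no remaining obstacle --- the only delicate point, namely pushing the double-negation wrappers of $ko$ through product, residuation, meet and join, was already dispatched via the (DN)-type admissible rules of Property \ref{property4} --- so the theorem follows as an immediate corollary.
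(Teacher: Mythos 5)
Your proposal is correct and takes essentially the same route as the paper: the theorem is read off directly from Lemma \ref{embeddability}, with the Kolmogorov translation $ko$ serving as the embedding, which is exactly what the paper's one-line proof does. Your additional remarks on the Lindenbaum--Tarski algebras and on how the lemma itself was established are accurate but not needed beyond the citation of the lemma.
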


    \begin{proof}
    It immediately follows from Lemma \ref{embeddability}.
    \end{proof}

    \begin{Corollary}
    $Inc\mc{RL^-}$ can be embedded into $qInc\mc{RL^-}$.
    \end{Corollary}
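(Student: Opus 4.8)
The plan is to transport the proof-theoretic embedding of Theorem~\ref{thmembeddability} to the algebraic setting by way of the completeness theorems. The first step is to record that the Lindenbaum--Tarski construction used in the proof of Theorem~\ref{complete} applies, essentially verbatim, to $\msf{qG}$: deleting the axiom (DN2) only replaces $\neg\neg\al\Ra\al$ by its converse (DN1), which is still available in $\msf{qG}$, so the quotient term algebra $\den{\cdot}$ now validates (dn1) rather than (dn) and is therefore a $qInc\mc{RL^-}$. Hence qInFL$^-_e$ is sound and complete with respect to $qInc\mc{RL^-}$, and the Lindenbaum algebra $\mbf{F}_{\msf{G}}$ of $\msf{G}$ (respectively $\mbf{F}_{\msf{qG}}$ of $\msf{qG}$) over a countably infinite set of variables is the free $Inc\mc{RL^-}$ (respectively the free $qInc\mc{RL^-}$) on those generators. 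It then suffices to embed $\mbf{F}_{\msf{G}}$ into an $Inc\mc{RL^-}$ canonically associated with $\mbf{F}_{\msf{qG}}$, since such an embedding of the free algebras already witnesses that the equational theory of $Inc\mc{RL^-}$ is faithfully interpreted inside that of $qInc\mc{RL^-}$.

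The embedding itself is carried by the Kolmogorov translation of Definition~\ref{Kolmogorov}: put $h(\den{\al})=\den{ko(\al)}$, where the left class is taken in $\msf{G}$ and the right one in $\msf{qG}$. Well-definedness and injectivity of $h$ are precisely the two directions of Lemma~\ref{embeddability}, i.e.\ $\vdash_{\msf{G}}\al\LR\be$ iff $\vdash_{\msf{qG}}ko(\al)\LR ko(\be)$. To see that $h$ is a homomorphism one first notes that its image consists of ``regular'' classes — those provably equivalent in $\msf{qG}$ to a formula of the form $\neg\de$, which is immediate from the shape of $ko$ — and that on such classes one should read the monoid and join operations through the double-negation closure, setting $x\cdot'y:=\neg\neg(x\cdot y)$ and $x\vee'y:=\neg\neg(x\vee y)$, while $\wedge$, $\bsl$ and $\neg$ are inherited unchanged. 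With these operations the regular classes form an $Inc\mc{RL^-}$: the required closure facts are exactly (DN$\land$) and (DN$\bsl$) from Property~\ref{property4}, together with $(\neg')$ and (DN1), and the residuation and involution laws then follow as in the proof of Proposition~\ref{propositionnegation}. Finally $h$ respects all operations by unwinding $ko$: $ko(\al\cdot\be)=\neg\neg(ko(\al)\cdot ko(\be))=ko(\al)\cdot' ko(\be)$, similarly for $\vee'$, $ko(\neg\al)=\neg ko(\al)$, and $ko(\al\land\be)=\neg\neg(ko(\al)\land ko(\be))$ collapses to $ko(\al)\land ko(\be)$ inside $\msf{qG}$ by (DN$\land$), likewise for $\bsl$ by (DN$\bsl$).

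I expect the second step to be the main obstacle, and it is conceptual rather than computational: because $ko$ does not commute with $\cdot$ or $\vee$ on the nose, the image of $h$ is not a subalgebra of $\mbf{F}_{\msf{qG}}$ in the naive sense, so one must first pass to the algebra of regular elements with the nucleus-adjusted operations $\cdot'$ and $\vee'$, and only then does the translation become a homomorphism; verifying that this adjusted structure is genuinely an $Inc\mc{RL^-}$ (and not merely a $qInc\mc{RL^-}$) is where Property~\ref{property4} does the real work. A minor but necessary preliminary is the first step, namely establishing completeness of $\msf{qG}$ with respect to $qInc\mc{RL^-}$, which was stated only for $\msf{G}$ but follows from the argument for Theorem~\ref{complete} with the obvious changes. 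Once these are in place the corollary follows at once from Lemma~\ref{embeddability} and the two completeness results.
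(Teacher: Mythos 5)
Your proposal is correct and takes essentially the same route as the paper: the paper derives the corollary in one line from Theorem~\ref{thmembeddability} together with the soundness/completeness results, exactly the strategy you follow. The only difference is one of detail — you explicitly build the Lindenbaum algebras and the nucleus of regular elements to make the word ``embedded'' precise, which the paper leaves implicit in its ``immediately follows.''
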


    \begin{proof}
    It immediately follows from Theorem \ref{thmembeddability} and soundness result of corresponding algebras.
    \end{proof}
    
%%%%%%%%%%%%%%%%%%%%%%%%%%%%%%%%%%%%%%%%%%
\section{Decidability}\label{section4}

    In this section, we shall consider a conservative extension of $Inc\mc{RL^-}$. By showing the cut-elimination result of such extension's sequent calculus, we will prove the decidability of InFL$^-_e$.

    \begin{Definition}\label{algebra2}
    An algebra $\ddot{\msf{F}}=(A,\wedge,\vee,\cdot,\bsl,\ast,\rightarrow,\bot, \top)$ is a bounded commutative bi-residuated lattice-ordered semigroup (denoted by $cb\mc{RL^-}$) such that $(A,\wedge,\vee,\cdot,\bsl)$ is a $c\mc{RL^-}$, and $(A,\land,\vee,\ast,\ra)$ is a commutative residuated lattice-ordered groupoid. $\bot,\top \in A$ are the least and greatest elements in $A$. Two fusions $\ast,\cdot$ are operations on $A$ satisfying the following condition for all $a,b,c\in A$:
    \[
    \mbox{If} (a\cdot b)\ast c =\bot,\ \mbox{then} (a\cdot c)\ast b =\bot.
    \]
    One defines $\neg a=a\ra\bot$, then one has $a\ast\neg a\le\bot$. Moreover a $cb\mc{RL^-}$ is indeed an $Inc\mc{RL^-}$ by the following proposition.
    \end{Definition}

    \begin{Proposition}
    The following properties still hold for $cb\mc{RL^-}$ for all $a,b,c\in A$:
        \begin{itemize}
            \item[$\mrm{(mon)}$] If $a_1\le b_1$ and $a_2\le b_2$, then $a_1\ast b_1\le a_2\ast b_2$;
            \item[$\mrm{(smn)}$] If $a\cdot b\le\neg c$, then $c\cdot b\le\neg a$ where $b$ can be null;
            \item[$\mrm{(dn1)}$] $a\le\neg\neg a$.
        \end{itemize}
    \end{Proposition}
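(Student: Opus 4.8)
The plan is to derive all three items from the residuation law for the groupoid fusion, $(\mrm{res}\ast)$: $a\ast b\le c$ iff $b\le a\ra c$, together with commutativity of $\ast$ and of $\cdot$, the fact that $\bot$ is the least element (so $x\le\bot$ is the same as $x=\bot$), and the compatibility axiom of Definition \ref{algebra2}: if $(a\cdot b)\ast c=\bot$ then $(a\cdot c)\ast b=\bot$. A useful preliminary observation, obtained by feeding $a\ra\bot\le a\ra\bot$ into $(\mrm{res}\ast)$, is that $a\ast\neg a\le\bot$, which is already recorded in Definition \ref{algebra2}.

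For $(\mrm{mon})$ --- which I read as the monotonicity of $\ast$, i.e.\ $a_1\le b_1$ and $a_2\le b_2$ imply $a_1\ast a_2\le b_1\ast b_2$ --- I would reuse the argument given for Property \ref{propertyfusion}(3): from $b_1\ast b_2\le b_1\ast b_2$ and $(\mrm{res}\ast)$ get $b_2\le b_1\ra(b_1\ast b_2)$, push $a_2\le b_2$ through, and apply $(\mrm{res}\ast)$ again to obtain $b_1\ast a_2\le b_1\ast b_2$; then repeat the same trick on the first coordinate, using commutativity of $\ast$ to swap the arguments. This step is entirely routine and uses no associativity of $\ast$.

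The main work is $(\mrm{smn})$. Assume $b$ is non-null and $a\cdot b\le\neg c$. Unfolding $\neg c=c\ra\bot$ and using $(\mrm{res}\ast)$ together with commutativity of $\ast$ turns this into the equation $(a\cdot b)\ast c=\bot$. Now apply the compatibility axiom once to get $(a\cdot c)\ast b=\bot$; rewrite $a\cdot c$ as $c\cdot a$ by commutativity of $\cdot$ and apply the compatibility axiom a second time to get $(c\cdot b)\ast a=\bot$. Finally read this back, via commutativity of $\ast$ and $(\mrm{res}\ast)$, as $c\cdot b\le a\ra\bot=\neg a$. The null-$b$ case is the same argument with $b$ erased: $a\le\neg c$ gives $c\ast a\le\bot$, hence $a\ast c\le\bot$ by commutativity, hence $c\le\neg a$ by $(\mrm{res}\ast)$. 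The only delicate point --- and the place where a careless proof would go wrong --- is the bookkeeping: one must match the bracketing $(\,\cdot\,)\ast(\,)$ expected by the compatibility axiom and insert the two commutativity rewrites (one for $\cdot$ between the two applications of the axiom, one for $\ast$ at the end) in exactly the right places, with the right instantiations of the axiom.

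Finally, $(\mrm{dn1})$ is immediate: it is the null-$b$ instance of $(\mrm{smn})$ (equivalently, of the law $(\mrm{mn})$ it yields) applied to $\neg a\le\neg a$, which gives $a\le\neg\neg a$; alternatively, from $\neg a\ast a=a\ast\neg a\le\bot$ one reads off $a\le\neg a\ra\bot=\neg\neg a$ by $(\mrm{res}\ast)$. So the whole proposition reduces to the two-fold application of the compatibility axiom inside $(\mrm{smn})$, with everything else being standard residuated-lattice manipulation; this also re-confirms that a $cb\mc{RL^-}$ is an $Inc\mc{RL^-}$.
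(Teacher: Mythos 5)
Your proof is correct and follows essentially the same route as the paper's: reduce the hypothesis to $(a\cdot b)\ast c=\bot$, pass through the compatibility axiom of Definition \ref{algebra2} (twice, with commutativity of $\cdot$ in between), and read the result back via residuation, with $(\mrm{mon})$ handled as in Property \ref{propertyfusion}(3) and $(\mrm{dn1})$ as the null instance. If anything, you spell out the double application of the compatibility axiom that the paper compresses into a single step, and you correctly read the misprinted statement of $(\mrm{mon})$ as ordinary monotonicity of $\ast$.
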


    \begin{proof}
    \
        \begin{itemize}
            \item[$\mrm{(mon)}$] The proof is quite similar to the proof of Property \ref{propertyfusion} (3).
            \item[$\mrm{(smn)}$] Assume $a\cdot b\le\neg c$, then one has $(a\cdot b)\ast c\le\neg c\ast c$ by (mon) together with $c\le c$. By Definition \ref{algebra2}, one has $c\ast\neg c\le \bot$. Therefore one has $(a\cdot b)\ast c\le \bot$. Next by Definition \ref{algebra2}, one has $(c\cdot b)\star a\le \bot$, whence $c\cdot b\le\neg a$. Therefore, $a\cdot b\le\neg c$ implies $c\cdot b\le\neg a$.
            \item[$\mrm{(dn1)}$] The proof is same as the proof of Proposition \ref{propositionnegation} (dn1).
        \end{itemize}
    \end{proof}

    \begin{Theorem}\label{expand}
    Every $qInc\mc{RL^-}$ can be extended to a $cb\mc{RL^-}$.
    \end{Theorem}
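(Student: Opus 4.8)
The plan is to realise the required $cb\mc{RL^-}$ as a completion of the given algebra, the one genuinely new step being the construction of the second residuated pair $(\ast,\ra)$.

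Let $\mathbf{A}=(A,\land,\vee,\cdot,\bsl,\neg)$ be a $qInc\mc{RL^-}$; we may assume $|A|\ge 2$, the one-element case being trivial. First I would pass to the Dedekind--MacNeille completion $\overline{A}$ of the poset $(A,\le)$, i.e. the complete lattice of all $X\subseteq A$ with $X=X^{u\ell}$, where $X^{u},X^{\ell}$ denote the sets of upper, resp. lower, bounds of $X$; write $\nu$ for its closure operator $X\mapsto X^{u\ell}$, and $\bot:=\emptyset^{u\ell}$, $\top:=A$ for its least and greatest elements. The map $a\mapsto{\downarrow}a$ embeds $A$ into $\overline{A}$ and preserves the existing finite meets and joins. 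I would then lift $\cdot,\bsl,\neg$ by setting $X\cdot Y:=\nu(\{a\cdot b\mid a\in X,\ b\in Y\})$, letting $X\bsl Y$ denote the corresponding residual, and putting $\neg X:=\bigcap_{a\in X}{\downarrow}\neg a$. Using only (res) one checks that $\nu$ is a nucleus for the pointwise product, so that $(\overline{A},\land,\vee,\cdot,\bsl)$ is a $c\mc{RL^-}$; a direct calculation gives ${\downarrow}a\cdot{\downarrow}b={\downarrow}(a\cdot b)$, ${\downarrow}a\bsl{\downarrow}b={\downarrow}(a\bsl b)$ and $\neg{\downarrow}a={\downarrow}\neg a$; and, unfolding the definitions and invoking (ctr), (dn1), (smn) for $\mathbf{A}$, one verifies that $\neg$ on $\overline{A}$ again satisfies (ctr), (dn1), (smn), so in particular $X\le\neg Y$ iff $Y\le\neg X$. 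Hence $\overline{\mathbf{A}}:=(\overline{A},\land,\vee,\cdot,\bsl,\neg)$ is a complete $qInc\mc{RL^-}$ extending $\mathbf{A}$.

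Now the second pair. Since $\bot\neq\top$, I would define a binary operation $\ast$ on $\overline{A}$ by $X\ast Y:=\bot$ if $X\le\neg Y$ and $X\ast Y:=\top$ otherwise. Commutativity of $\ast$ is exactly the equivalence $X\le\neg Y$ iff $Y\le\neg X$; monotonicity in each argument is immediate (for the second argument via (ctr)); and a short case distinction shows $\ast$ preserves arbitrary joins in each argument. By completeness $\ast$ therefore has a residual $\ra$ with $X\ast Y\le Z$ iff $Y\le X\ra Z$, so $(\overline{A},\land,\vee,\ast,\ra)$ is a commutative residuated lattice-ordered groupoid. Evaluating the residual one finds $X\ra Z=\neg X$ for every $Z\neq\top$ (and $X\ra\top=\top$); in particular $X\ra\bot=\neg X$. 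Thus the negation $X\mapsto X\ra\bot$ that a $cb\mc{RL^-}$ derives from $\ra$ and $\bot$ coincides with $\neg$ on $\overline{A}$, hence restricts on the image of $A$ to the negation of $\mathbf{A}$. Finally, for the interaction axiom: if $(X\cdot Y)\ast Z=\bot$, then $X\cdot Y\le\neg Z$, whence $X\cdot Z\le\neg Y$ by (smn) in $\overline{\mathbf{A}}$ (and commutativity of $\cdot$), i.e.\ $(X\cdot Z)\ast Y=\bot$. Therefore $\ddot{\mathbf{A}}:=(\overline{A},\land,\vee,\cdot,\bsl,\ast,\ra,\bot,\top)$ is a $cb\mc{RL^-}$, and $a\mapsto{\downarrow}a$ realises $\mathbf{A}$ as a subalgebra of the $(\land,\vee,\cdot,\bsl,\neg)$-reduct of $\ddot{\mathbf{A}}$, which is the claim.

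The delicate point is the choice of $\ast$: it has to be commutative, residuated (equivalently, join-preserving on the completion), compatible with $\neg$ through $x\ra\bot=\neg x$, and validate the ``swap'' interaction, all at once, and the coarse two-valued formula above is precisely what reconciles these demands; note that it makes $\ast$ non-associative, which is harmless since Definition \ref{algebra2} asks only for a residuated \emph{groupoid}. By contrast, lifting $\cdot,\bsl,\neg$ to the Dedekind--MacNeille completion and checking that (dn1) and (smn) persist is a routine nucleus argument; the only reason a completion enters at all is to produce the bounds $\bot,\top$ and enough joins for $\ast$ to be residuated, and one could instead adjoin to $A$ a fresh greatest element together with a fresh absorbing least element, trading the nucleus computation for a longer but elementary case analysis.
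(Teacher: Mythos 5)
Your construction of the second residuated pair is the same as the paper's: you send $X\ast Y$ to $\bot$ when $X\le\neg Y$ and to $\top$ otherwise, and read off $X\ra Z=\neg X$ for $Z\neq\top$ (so $X\ra\bot=\neg X$), verifying commutativity via the symmetry of (mn), residuation, and the swap law via (smn) plus commutativity of $\cdot$ — exactly the five properties the paper checks. Where you genuinely diverge is in first passing to the Dedekind--MacNeille completion. The paper simply sets $\top=\bigvee A$ and $\bot=\bigwedge A$, which tacitly assumes the given lattice is bounded; a $qInc\mc{RL^-}$ need not be, so your completion step (or your suggested alternative of adjoining fresh bounds) is not mere pedantry but repairs a real gap, at the price of having to check that $\cdot,\bsl,\neg$ lift to the completion with (dn1) and (smn) intact and that the embedding $a\mapsto{\downarrow}a$ preserves all operations — which is needed anyway for the later conservativity argument, where a countermodel in the $qInc\mc{RL^-}$ must transfer to the constructed $cb\mc{RL^-}$. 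Your sketched verifications of these points (nucleus argument for $\cdot,\bsl$; $\neg X=\bigcap_{a\in X}{\downarrow}\neg a$; join-preservation of $\ast$ using that $X\le\neg Y$ iff $Y\le\neg X$) are all sound, so the proposal is correct and, on the bounded case, coincides with the paper's proof.
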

    
    \begin{proof}
    Given a $qInc\mc{RL^-}$, from which we construct an algebra $M=(A,\wedge,\vee,\cdot,\bsl,\neg)$. Let $\top=\bigvee A$ be the greatest element of $A$, $\bot=\bigwedge A$ be the least element of $A$. $\ast,\rightarrow$ are operations on $A$ defined as follow:
        \[a \ast b=\left\{
        \begin{aligned}
        \bot \quad \mrm{if} \quad a\leq \neg b \\
        \top \quad \mrm{otherwise}
        \end{aligned}
        \right.
        \quad
        a \rightarrow b=\left\{
        \begin{aligned}
        \neg a \quad \mrm{if} \quad b\neq\top \\
        \top \quad \mrm{otherwise}
        \end{aligned}
        \right.
        \]
    Then the following properties hold in $M$ for any $a,b \in A$:
    \begin{itemize}
        \item[(1)] $a\ast b\leq c$ iff $b \leq a\rightarrow c$;
        \item[(2)] If $(a\cdot b)\ast c = \bot$, then $ (a\cdot c)\ast b = \bot$;
        \item[(3)] $a \ast b = b \ast a$;
        \item[(4)] $\neg a = a\rightarrow\bot$;
        \item[(5)] $a\ast\neg a=\bot$.
    \end{itemize}
    
        \begin{itemize}
            \item[(1)] Assume $a\ast b\leq c$, we consider two cases. (i) $c=\top$, then $a\ra c=\top$ by the construction above. Thus, $b\leq\top=a\ra c$. (ii) $c\neq\top$, then $c<\top$ and $a\ra c =\neg a$ by the construction. Since $a\ast b\leq c$, then $a\ast b =\bot$. Then by the construction, one has $a \leq\neg b$. Hence by Definition \ref{algebra1}, $\neg\neg b \leq \neg a$ and $b\leq\neg\neg b$. Whence $b\leq \neg a$ i.e. $b \leq a \ra c$. Assume $b \leq a \ra c$, we consider two cases. (i) $c=\top$, then $a\ast b\leq c=\top$. (ii) $c\neq\top$, then $a\ra c =\neg a$ by the construction. Hence $b \leq a \ra c=\neg a$, then $\neg\neg a \leq \neg b$ and $a\leq \neg\neg a$ by Definition \ref{algebra1}. Therefore, $a\leq \neg b$, then $a\ast b=\bot$ by the construction. So $a\ast b\leq c$.

            \item[(2)] Assume $(a\cdot b)\ast c = \bot$, then $a\cdot b\leq \neg c$ by the construction, one has (i) $a\cdot\neg\neg c\leq \neg b$ by (smn). From $a\leq a$ and $c\leq\neg\neg c$, we obtain (ii) $a\cdot c\leq a\cdot\neg\neg c$ by $(\mrm{mon})$. Thus one gets $a\cdot c\leq\neg b $ from (i) and (ii) by Transitivity. Therefore one has $(a\cdot c)\ast b =\bot$ by the construction.

            \item[(3)] We consider two cases, (i) $a\ast b=\bot$, then by the construction above $a\leq\neg b$. Hence by (smn), one has $\neg\neg b\leq\neg a$. Whence $b\leq\neg a$ by Definition \ref{algebra1}. Hereafter $b\ast a=\bot$ by the construction. (ii) $a\ast b=\top$, assume $b\ast a=\bot$, then by (i) one has $a\ast b=\bot$ which generates a contradiction. Therefore, $b\ast a=\top$ i.e. $a\ast b=b\ast a$.

            \item[(4)] By the construction above, $a\ra \bot=\neg a$.

            \item[(5)] Since $a\ra\bot\le a\ra\bot$, one has $a\ast\neg a\le\bot$ by (1) and (4). Since $\bot\le a\ast\neg a$, one has $a\ast\neg a=\bot$.
        \end{itemize}
   
    Therefore, the constructed algebra $M$ from the $qInc\mc{RL^-}$ is a $cb\mc{RL^-}$.
    \end{proof}

    \begin{Remark}
    Note that two fusions $\ast,\cdot$ in $cb\mc{RL^-}$ are essentially different. Associativity does not hold for $\ast$. Take $(a\ast b)\ast c$ and $a\ast(b\ast c)$ with $a\le\neg b$ but $b\nleq\neg c$ for example. Then one has $(a\ast b)\ast c=\bot\ast c=\bot$ but $a\ast(b\ast c)=a\ast\top=\top$. Therefore, $(a\ast b)\ast c\neq a\ast(b\ast c)$ for $\ast$, which is consistent with the fact that ($A,\land,\vee,\ast,\ra$) is a commutative residuated lattice-ordered groupoid in Definition \ref{algebra2}.
    \end{Remark}

    In what follows we are going to present the logic for the $cb\mc{RL^-}$ denoted by bFL$^-_e$. The cut-elimination and the decidability result for bFL$^-_e$ will be proved.

    \begin{Definition}
    The set of formulas (terms) $\mathcal{F}$ for bFL$^-_e$ is defined inductively as follows:
    \[\mathcal{F}\ni \alpha::= p\mid  \alpha\cdot\beta \mid \alpha\bsl\beta \mid \alpha\wedge\beta \mid \alpha\vee\beta \mid \alpha\ast\beta \mid \alpha\rightarrow\beta \mid \bot
    \]
    where $p\in \mbf{Var}$. We use the abbreviations $\neg\alpha:=\alpha\rightarrow\bot$ and $\top=\neg \bot$.
    \end{Definition}
    
    \begin{Definition}
    Let $,$ and $;$ be structural counterparts for $\cdot$ and $\ast$ respectively. The set of all formula
    structures $\mathcal{FS}$ is defined inductively as follows:
    \[\mathcal{FS} \ni \Gamma ::= \alpha \mid \Gamma,\Gamma\mid \Gamma;\Gamma
    \]
    A \textit{sequent} is an expression of the form $\Gamma\Ra \alpha$ where $\Gamma$ is a formula structure and $\alpha$ is a formula. A \textit{context} is a formula structure $\Gamma[-]$ with a designated position  $[-]$ which can be filled with a formula structure. In particular, a single position $[-]$ is a context. For instance $\Gamma[\Delta]$ is the formula structure obtained from $\Gamma[-]$ by substituting $\Delta$ for $[-]$. By $f(\Gamma)$ we denote the formula obtained from $\Gamma$ by replacing all structure operations with their corresponding formula connectives.
    \end{Definition}
    
    \begin{Definition}{\em
    The Gentzen-style sequent calculus $\msf{Gb}$ for bFL$^-_e$ consists of the following axiom and rules:
    \begin{itemize}
        \item[$(1)$] Axiom:
        \[
        (\mrm{Id}) \alpha\Ra\alpha
        \]
        \item[$(2)$] Logical rules:
        \[
        \frac{\Gamma[\alpha,\beta]\Ra\gamma}{\Gamma[\alpha\cdot\beta]\Ra\gamma}{({\cdot}\mrm{L})}
        \quad
        \frac{\Gamma_1 \Ra\alpha\quad\Gamma_2\Ra\beta}{\Gamma_1,\Gamma_2\Ra\alpha\cdot\beta}{({\cdot}\mrm{R})}
        \]
        \[
        \frac{\Gamma[\alpha;\beta]\Ra\gamma}{\Gamma[\alpha\ast\beta]\Ra\gamma}{({\ast}\mrm{L})}
        \quad
        \frac{\Gamma_1 \Ra\alpha\quad\Gamma_2\Ra\beta}{\Gamma_1;\Gamma_2\Ra\alpha\ast\beta}{({\ast}\mrm{R})}
        \]
        
        \[
        \frac{ \Delta\Ra\alpha\quad\Gamma[\beta]\Ra \gamma }{\Gamma[\Delta,\alpha\bsl\beta]\Ra \gamma}{(\bsl \mrm{L})}
        \quad
        \frac{\alpha,\Gamma\Ra \beta}{\Gamma\Ra \alpha\bsl\beta}{(\bsl \mrm{R})}
        \]
        
        \[
        \frac{ \Delta\Ra\alpha\quad\Gamma[\beta]\Ra \gamma }{\Gamma[\Delta;\alpha\ra\beta]\Ra \gamma}{(\ra \mrm{L})}
        \quad
        \frac{\alpha;\Gamma\Ra \beta}{\Gamma\Ra \alpha\ra\beta}{(\ra \mrm{R})}
        \]
        
        \[
        \frac{\Gamma[\alpha]\Ra\beta}{\Gamma[\alpha\wedge\gamma]\Ra\beta}{({\wedge}\mrm{L})}
        \quad
        \frac{\Gamma \Ra\alpha\quad\Gamma\Ra\beta}{\Gamma\Ra\alpha\wedge\beta}{({\wedge}\mrm{R})}
        \]
        \[
        \frac{\Gamma[\alpha]\Ra\gamma \quad \Gamma[\beta]\Ra\gamma}{\Gamma[\alpha\vee\beta]\Ra\gamma}{({\vee}\mrm{L})}
        \quad
        \frac{\Gamma\Ra\alpha}{\Gamma\Ra\alpha\vee\beta}{({\vee}\mrm{R})}
        \]
        \[
        \frac{\Delta\Ra\bot}{\Gamma[\Delta]\Ra\alpha}{(\bot)}
        \]
        \item[$(2)$] Cut rule:
        \[
        \frac{\Delta\Ra \alpha\quad \Gamma[\alpha]\Ra \beta}{\Gamma[\Delta]\Ra\beta}{(\mrm{Cut})}
        \]
        \item[$(3)$] Structure rules:
        \[
        \frac{(\Delta_1,\Delta_2);\Delta_3\Ra \bot}{(\Delta_1,\Delta_3);\Delta_2\Ra \bot}{(\mrm{R}-\bot)}
        \]
        \[
        \frac{\Gamma[\Delta_1,\Delta_2]\Ra \beta}{\Gamma[\Delta_2,\Delta_1]\Ra \beta}{(\mrm{Ex})}
        \quad
        \frac{\Gamma[\Delta_1;\Delta_2]\Ra \beta}{\Gamma[\Delta_2;\Delta_1]\Ra \beta}{(\mrm{Ex^;})}
        \]
        \[
        \frac{\Gamma[\Delta_1,(\Delta_2,\Delta_3)]\Ra \beta}{\Gamma[(\Delta_1,\Delta_2),\Delta_3]\Ra \beta}{(\mrm{As_1})}
        \quad
        \frac{\Gamma[(\Delta_1,\Delta_2),\Delta_3]\Ra \beta}{\Gamma[\Delta_1,(\Delta_2,\Delta_3)]\Ra \beta}{(\mrm{As_2})}
        \]
        %\[
        %\frac{\Gamma[\Delta_1;(\Delta_2;\Delta_3)]\Ra \beta}{\Gamma[(\Delta_1;\Delta_2);\Delta_3]\Ra \beta}{(\mrm{As^;_1})}
        %\quad
        %\frac{\Gamma[(\Delta_1;\Delta_2);\Delta_3]\Ra \beta}{\Gamma[\Delta_1;(\Delta_2;\Delta_3)]\Ra \beta}{(\mrm{As^;_2})}
        %\]
        \end{itemize}
        }
    \end{Definition}
    
    A sequent $\Gamma\Ra \beta$ is provable in $\msf{Gb}$, notation $\vdash_{\msf{Gb}}\Gamma\Ra \beta$, if there is a derivation of $\Gamma\Ra \beta$ in $\msf{Gb}$. We write $\vdash_{\msf{Gb}}\alpha\Leftrightarrow \beta$ if $\vdash_{\msf{Gb}}\alpha\Ra  \beta$ and $\vdash_{\msf{Gb}}\beta\Ra \alpha$. For $,$ and $;$ are commutative and associative, the rules of exchange and associativity are all admissible in $\msf{Gb}$. Hereafter we usually skip the applications of those rules in the derivations.
    
    \begin{Theorem}[Soundness and Completeness]\label{complete2}
    $\msf{qG}$ is sound and complete with respect to $cb\mc{RL^-}$s.
    \end{Theorem}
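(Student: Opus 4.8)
The plan is to factor the theorem through $qInc\mc{RL^-}$, exploiting that $\msf{qG}$ is the internal logic of $qInc\mc{RL^-}$ while $cb\mc{RL^-}$ and $qInc\mc{RL^-}$ coincide on the fragment of the language that $\msf{qG}$ can express. First I would record the unit-free analogue of Theorem \ref{complete}: $\msf{qG}$ is sound and complete with respect to $qInc\mc{RL^-}$. Completeness here is the Lindenbaum--Tarski argument already used for $\msf{G}$, working with $\den{\alpha}=\{\beta:\vdash_{\msf{qG}}\alpha\LR\beta\}$ and ordering $\den{\alpha}\le\den{\beta}$ iff $\vdash_{\msf{qG}}\alpha\Ra\beta$; the only thing to check beyond Theorem \ref{complete} is that the quotient validates (in) and (dn1) rather than (dn), which is immediate since the rule $(\neg)$ is precisely (smn) and the axiom (DN2) has been dropped.

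For soundness of $\msf{qG}$ with respect to $cb\mc{RL^-}$, I would note that the $(\land,\vee,\cdot,\bsl,\neg)$-reduct of any $cb\mc{RL^-}$, with $\neg a=a\ra\bot$, is a $qInc\mc{RL^-}$: the Proposition following Definition \ref{algebra2} yields (smn) and (dn1), and a $c\mc{RL^-}$ carrying a negation that satisfies (smn) is already a $qInc\mc{RL^-}$, as observed right after the (smn)/(in) discussion. It then suffices to verify that every rule of $\msf{qG}$ preserves the relation $\mu(\cdot)\le\mu(\cdot)$ in an arbitrary $qInc\mc{RL^-}$: the residuation rules (RES$\bsl$) and (RES$^{-}\bsl$) are the two halves of (res), the lattice rules follow from Property \ref{propertylattice}, the rule $(\neg)$ is valid because every $qInc\mc{RL^-}$ satisfies (smn) (recall (in) implies (smn)), and (Cut) is transitivity of $\le$. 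Hence $\vdash_{\msf{qG}}\alpha\Ra\beta$ forces $\mu(\alpha)\le\mu(\beta)$ in every $cb\mc{RL^-}$.

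For completeness I would prove that a $\msf{qG}$-sequent is valid in all $cb\mc{RL^-}$ iff it is valid in all $qInc\mc{RL^-}$. The reduct observation of the previous paragraph gives one direction. For the converse, given a $qInc\mc{RL^-}$ refuting $\alpha\Ra\beta$, Theorem \ref{expand} extends it to a $cb\mc{RL^-}$ on the same carrier that leaves $\land,\vee,\cdot,\bsl,\neg$ and the lattice order untouched; since the value of a $\msf{qG}$-sequent depends only on these operations and on $\le$, the refutation survives. Combining with the first step: if $\not\vdash_{\msf{qG}}\alpha\Ra\beta$, then $\alpha\Ra\beta$ fails in some $qInc\mc{RL^-}$, hence in some $cb\mc{RL^-}$, which is the contrapositive of completeness.

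The hard part will be the boundedness assumption concealed in Theorem \ref{expand}, whose construction takes $\top=\bigvee A$ and $\bot=\bigwedge A$: the Lindenbaum algebra of $\msf{qG}$ possesses neither a greatest nor a least element, so Theorem \ref{expand} cannot be invoked on it as it stands. I would close this gap by first embedding the refuting $qInc\mc{RL^-}$ into a bounded one --- adjoining a fresh least element $0$ and greatest element $1$ with $0<x<1$ on the old carrier, setting $\neg 0=1$ and $\neg 1=0$, letting $0$ be absorbing for $\cdot$, and extending $\bsl$ by the forced residuation clauses --- and then checking that commutativity, associativity, (res), (in) and (dn1) all persist while the order, and therefore the refutation of $\alpha\Ra\beta$, is unchanged on the old elements. (Alternatively one may use a Dedekind--MacNeille-style completion and verify that it preserves the quasi-involutive negation.) This preservation verification, rather than any step of the main reduction, is where the genuine effort lies; once a bounded refuting $qInc\mc{RL^-}$ is in hand, Theorem \ref{expand} applies verbatim and the argument concludes.
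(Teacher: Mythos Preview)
The statement as printed almost certainly contains a typo: the paper means $\msf{Gb}$, not $\msf{qG}$. The evidence is that Theorem~\ref{conservativextension} invokes Theorem~\ref{complete2} precisely as ``$\vdash_{\msf{Gb}}\alpha\Ra\beta$ iff $\vDash_{cb\mc{RL^-}}\alpha\Ra\beta$'', and the paper's own proof---the single line ``analogous to Theorem~\ref{complete}''---only makes sense as a direct Lindenbaum--Tarski construction for $\msf{Gb}$ against $cb\mc{RL^-}$: both live in the extended language $\{\land,\vee,\cdot,\bsl,\ast,\ra,\bot\}$, so the term algebra of $\msf{Gb}$ is itself a $cb\mc{RL^-}$ and the argument of Theorem~\ref{complete} transfers verbatim. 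Under that intended reading your route through $qInc\mc{RL^-}$ and Theorem~\ref{expand} is unnecessary.

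Taking the statement at face value with $\msf{qG}$, however, your approach is the right one and is essentially the content of Theorem~\ref{conservativextension} pulled forward: prove soundness and completeness of $\msf{qG}$ with respect to $qInc\mc{RL^-}$, then transfer along the reduct/extension pair between $qInc\mc{RL^-}$ and $cb\mc{RL^-}$. The paper's one-line proof would \emph{not} establish this reading, since the Lindenbaum algebra of $\msf{qG}$ carries no $\ast,\ra,\bot,\top$ and is not a $cb\mc{RL^-}$. Your diagnosis of the boundedness issue is also on target and exposes a genuine gap the paper does not address: the proof of Theorem~\ref{expand} silently assumes $\bigvee A$ and $\bigwedge A$ exist, while the Lindenbaum algebra of $\msf{qG}$ is unbounded. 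Be aware that your sketched fix of adjoining fresh bounds $0,1$ is underspecified---you say $0$ is absorbing for $\cdot$ but not what $1\cdot a$, $1\bsl a$, $a\bsl 0$ are---and verifying that (in) $a\bsl\neg b=b\bsl\neg a$ survives on mixed old/new arguments is exactly where the difficulty sits; the completion alternative has the same burden.
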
   
    
    \begin{Theorem}[Cut-Elimination]\label{ce}
    $\vdash_{\msf{Gb}}\Gamma\Ra \beta$ iff $\vdash_{\msf{Gb}}\Gamma\Ra \beta$ without any application of $(\mrm{Cut})$ rule.
    \end{Theorem}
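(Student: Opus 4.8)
The plan is to prove cut-elimination for $\msf{Gb}$ by the standard Gentzen-style double induction, but with careful attention to the two non-standard features of this calculus: the two distinct structural connectives ``$,$'' (for $\cdot$) and ``$;$'' (for $\ast$), and the interaction rule $(\mrm{R}{-}\bot)$ together with the $(\bot)$ rule. First I would set up the measure: to eliminate a topmost application of $(\mrm{Cut})$ with cut formula $\alpha$, premises $\Delta\Ra\alpha$ and $\Gamma[\alpha]\Ra\beta$, I induct primarily on the complexity $|\alpha|$ of the cut formula, and secondarily on the sum of the heights of the derivations of the two premises. As usual, one shows that any such cut can be replaced either by cuts on formulas of strictly smaller complexity, or by cuts on the same formula but with strictly smaller combined premise-height.

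The case analysis splits according to how the two premises are derived. The \emph{principal} cases — where $\alpha$ is introduced on the right in the left premise and on the left in the right premise — are handled by the usual reductions: for $\alpha=\alpha_1\cdot\alpha_2$, $\alpha=\alpha_1\ast\alpha_2$, $\alpha=\alpha_1\bsl\alpha_2$, $\alpha=\alpha_1\ra\alpha_2$, $\alpha=\alpha_1\wedge\alpha_2$, $\alpha=\alpha_1\vee\alpha_2$ one rewrites the cut as one or two cuts on the immediate subformulas, using the commutativity/associativity admissibility (exchange and $(\mrm{As_1}),(\mrm{As_2})$) noted after the definition of $\msf{Gb}$ to rearrange structures as needed. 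The $(\bot)$ axiom has no right-introduction form, so there is no principal case for $\alpha=\bot$; however $\bot$ can still be a cut formula when the left premise ends in $(\bot)$ or $(\mrm{R}{-}\bot)$, which is treated in the permutation cases below. The \emph{non-principal} (permutation) cases are where the last rule of one premise does not act on the cut formula: one permutes the cut upward past that rule. This is routine for all the logical rules and for $(\mrm{Ex})$, $(\mrm{Ex^;})$, $(\mrm{As_1})$, $(\mrm{As_2})$, since in each the cut formula sits in a side context untouched by the rule, and the height of that premise drops. When the left premise ends in $(\bot)$, i.e.\ $\Delta=\Delta'[\Theta]$ with $\Theta\Ra\bot$ premising $\Delta'[\Theta]\Ra\alpha$, we simply apply $(\bot)$ directly: from $\Theta\Ra\bot$ we get $\Gamma[\Delta'[\Theta]]\Ra\beta$, discarding the cut entirely. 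When the right premise ends in $(\bot)$ we argue symmetrically, and when it ends in $(\mrm{R}{-}\bot)$ with the cut formula occurrence in one of the $\Delta_i$ side-structures, the cut permutes above $(\mrm{R}{-}\bot)$ just as it does above the other structural rules.

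The main obstacle I expect is the interaction between the cut and the rule $(\mrm{R}{-}\bot)$, precisely because this rule is sensitive to the shape of the structure and mixes the two structural connectives, so the cut-permutation must check that substituting $\Delta$ into the designated position of $\Gamma[-]$ does not destroy the $(\Delta_1,\Delta_2);\Delta_3$ pattern that $(\mrm{R}{-}\bot)$ requires, or that the cut formula is not itself the ``hinge'' of that pattern. The key technical observation is that $(\mrm{R}{-}\bot)$ only fires on sequents whose succedent is $\bot$, and it only rearranges structures; combined with the fact that the context $\Gamma[-]$ in the cut conclusion is inherited literally from the right premise, one checks that the designated position $[-]$ lies strictly inside one of the $\Delta_i$ (as a sub-structure), so the $(\mrm{R}{-}\bot)$ step can be performed \emph{after} the cut on the enlarged structure. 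The one genuinely delicate subcase is a cut on $\bot$ itself whose left premise was obtained by $(\mrm{R}{-}\bot)$: here one traces the $\bot$-succedent upward — since $\bot$ on the right can ultimately only have been produced by an axiom-free path through $(\mrm{R}{-}\bot)$ and $(\bot)$ or by a deeper cut — and pushes the cut up until it meets a $(\bot)$ application, at which point it is absorbed as above. Once all cases are verified, the secondary induction closes each non-principal case and the primary induction closes each principal case, so every topmost cut is eliminable; iterating from the top of the derivation downward removes all cuts, which proves the theorem.
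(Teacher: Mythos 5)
Your overall strategy is the same as the paper's: a double induction on (I) the complexity of the cut formula and (II) the sum of the lengths of the two premises, with the usual split into principal and permutation cases and special attention to $(\bot)$ and $(\mrm{R}{-}\bot)$; in particular your treatment of the right premise ending in $(\mrm{R}{-}\bot)$ (permute the cut above it, since the resulting sequent still has succedent $\bot$ and the required top-level shape) is exactly the paper's.

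The one subcase where your argument as written does not go through is when the \emph{left} premise of the cut ends with $(\mrm{R}{-}\bot)$. Your plan to ``trace the $\bot$-succedent upward and push the cut up until it meets a $(\bot)$ application'' fails on two counts. First, the provenance claim is wrong: a $\bot$-succedent need not come from $(\bot)$ or $(\mrm{R}{-}\bot)$ at all --- it can originate from the axiom $\bot\Ra\bot$, from $({\vee}\mrm{L})$, $({\wedge}\mrm{L})$, $(\bsl\mrm{L})$, $(\ra\mrm{L})$ or the other structural rules --- so the trace need not terminate at a $(\bot)$. Second, and more importantly, permuting the cut above $(\mrm{R}{-}\bot)$ is precisely the move that is blocked: after the permuted cut the sequent has the form $\Sigma[(\Delta_1,\Delta_2);\Delta_3]\Ra\beta$ with an arbitrary context $\Sigma$ and arbitrary succedent $\beta$, whereas $(\mrm{R}{-}\bot)$ applies only to a whole sequent of the shape $(\Delta_1,\Delta_2);\Delta_3\Ra\bot$, so the rule cannot be re-applied below the cut. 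The repair is the one-line observation you already use when the left premise ends in $(\bot)$: since the conclusion of $(\mrm{R}{-}\bot)$ has succedent $\bot$, the cut formula is $\bot$ and the right premise is $\Sigma[\bot]\Ra\beta$; apply $(\bot)$ directly to $(\Delta_1,\Delta_3);\Delta_2\Ra\bot$ to obtain $\Sigma[(\Delta_1,\Delta_3);\Delta_2]\Ra\beta$ cut-free, discarding the cut entirely. This is exactly how the paper handles this subcase; with that substitution your proposal coincides with the paper's proof.
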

    
    \begin{proof}
    The proof is analogous to Theorem \ref{complete}.
    \end{proof}
    
    \begin{proof}
    Assume that there is a subderivation of $\Gamma\Ra\beta$ ended with an application of $(\mrm{Cut})$ as follows:
    \[
    \frac{\vdash\Delta\Ra\alpha \quad \vdash\Sigma[\alpha]\Ra\beta}{\vdash\Sigma[\Delta]\Ra\beta}{(\mrm{Cut})}
    \]
    We suffice to show that if $\Delta\Ra\alpha$ and $\Sigma[\alpha]\Ra\beta$ are both provable in $\msf{Gb}$ without any application of $(\mrm{Cut})$, then $\Sigma[\alpha]\Ra\beta$ is provable in $\msf{G'}$ without any application of $(\mrm{Cut})$. We proceed by induction on (I) the complexity of $(\mrm{Cut})$ formula $\alpha$. In each case, we proceed by induction on (II) the sum of the length of two premises of $(\mrm{Cut})$. Assume that $\Delta\Ra\alpha$ is obtained by $(R_l)$ and $\Sigma[\alpha]\Ra\beta$ is obtained by $(R_r)$. We refer the details to the standard cut-elimination proof.
    \begin{itemize} 
        \item[(1)] $\alpha$ is not introduced by $(R_l)$. We transform the derivation by first applying $(\mrm{Cut})$ to premises of $(R_l)$ and $\Sigma[\alpha]\Ra\beta$. After that, we apply $(R_l)$ to the resulting sequent. Take $(\vee\mrm{L})$ as an example to interpret this. The remaining cases can be treated similarly.\\
        $(R_l)$ is $(\vee\mrm{L})$. Then $\Delta=\Gamma[\alpha\vee\beta]$, $\alpha=\gamma$. Suppose the derivation ends with:
            \begin{prooftree}
                \AxiomC{$\Gamma[\alpha]\Ra\gamma$}
                \AxiomC{$\Gamma[\beta]\Ra\gamma$}
                \RightLabel{ $(\vee\mrm{L})$}
                \BinaryInfC{$\Gamma[\alpha\vee\beta]\Ra\gamma$}
                \AxiomC{$\Sigma[\gamma]\Ra\theta$}
                \RightLabel{ $(\mrm{Cut})$}
                \BinaryInfC{$\Sigma[\Gamma[\alpha\vee\beta]]\Ra\theta$}
            \end{prooftree}
        can be transformed into
            \begin{prooftree}
                \AxiomC{$\Gamma[\alpha]\Ra\gamma$}
                \AxiomC{$\Sigma[\gamma]\Ra\theta$}
                \RightLabel{ $(\mrm{Cut})$}
                \BinaryInfC{$\Sigma[\Gamma[\alpha]]\Ra\theta$}
                \AxiomC{$\Gamma[\beta]\Ra\gamma$}
                \AxiomC{$\Sigma[\gamma]\Ra\theta$}
                \RightLabel{ $(\mrm{Cut})$}
                \BinaryInfC{$\Sigma[\Gamma[\beta]]\Ra\theta$}
                \RightLabel{ $(\vee\mrm{L})$}
                \BinaryInfC{$\Sigma[\Gamma[\alpha\vee\beta]]\Ra\theta$}
            \end{prooftree}
        Thus the applications of (Cut) in the premises have lower lengths. Hence by induction hypothesis (II), the claim holds.\\
        We specially consider the case that $(R_l)$ is $(\mrm{R}-\bot)$, suppose the derivation ends with:
            \begin{prooftree}
                \AxiomC{$(\Delta_1,\Delta_2);\Delta_3\Ra\bot$}
                \RightLabel{ $(\mrm{R}-\bot)$}
                \UnaryInfC{$(\Delta_1,\Delta_3);\Delta_2\Ra\bot$}
                \AxiomC{$\Sigma[\bot]\Ra\beta$}
                \RightLabel{ $(\mrm{Cut})$}
                \BinaryInfC{$\Sigma[(\Delta_1,\Delta_3);\Delta_2]\Ra\beta$}
            \end{prooftree}
        can be transformed into:
            \begin{prooftree}
                \AxiomC{$(\Delta_1,\Delta_2);\Delta_3\Ra\bot$}
                \RightLabel{ $(\mrm{R}-\bot)$}
                \UnaryInfC{$(\Delta_1,\Delta_3);\Delta_2\Ra\bot$}
                \RightLabel{ $(\bot)$}
                \UnaryInfC{$\Sigma[(\Delta_1,\Delta_3);\Delta_2]\Ra\beta$}
            \end{prooftree}
        It is obviously cut-free, then the claim holds.
        
        \item[(2)] $\alpha$ is introduced by $(R_l)$ only. We transform the derivation by first applying $(\mrm{Cut})$ to the premise of $(R_r)$ and $\Delta\Ra\alpha$. After that, we apply $(R_r)$ to the resulting sequent. Take $(\bsl\mrm{L})$ as an example to interpret this. The remaining cases can be treated similarly.
        $(R_r)$ is $(\bsl\mrm{L})$, then $\Sigma[\alpha]=\Gamma[\Theta[\alpha],\beta\bsl\gamma]$, $\beta=\theta$. Suppose the derivation ends with:
            \begin{prooftree}
                \AxiomC{$\Delta\Ra\alpha$}
                \AxiomC{$\Theta[\alpha]\Ra\beta$}
                \AxiomC{$\Gamma[\gamma]\Ra\theta$}
                \RightLabel{ $(\bsl\mrm{L})$}
                \BinaryInfC{$\Gamma[\Theta[\alpha],\beta\bsl\gamma]\Ra\theta$}
                \RightLabel{ $(\mrm{Cut})$}
                \BinaryInfC{$\Gamma[\Theta[\Delta],\beta\bsl\gamma]\Ra\theta$}
            \end{prooftree}
        can be transformed into
            \begin{prooftree}
                \AxiomC{$\Delta\Ra\alpha$}
                \AxiomC{$\Theta[\alpha]\Ra\beta$}
                \RightLabel{ $(\mrm{Cut})$}
                \BinaryInfC{$\Theta[\Delta]\Ra\beta$}
                \AxiomC{$\Gamma[\gamma]\Ra\theta$}
                \RightLabel{ $(\bsl\mrm{L})$}
                \BinaryInfC{$\Gamma[\Theta[\Delta],\beta\bsl\gamma]\Ra\theta$}
            \end{prooftree}
        Thus the new application of $(\mrm{Cut})$ has a lower length of its premise. By induction hypothesis (II), the claim holds.\\
        We specially consider the case that $(R_r)$ is $(\mrm{R}-\bot)$, if $\alpha$ is in $\Delta_1$, suppose the derivation ends with:
            \begin{prooftree}
                \AxiomC{$\Delta\Ra\alpha$}
                \AxiomC{$(\Delta_1[\alpha],\Delta_2);\Delta_3\Ra\bot$}
                \RightLabel{ $(\mrm{R}-\bot)$}
                \UnaryInfC{$(\Delta_1[\alpha],\Delta_3);\Delta_2\Ra\bot$}
                \RightLabel{ $(\mrm{Cut})$}
                \BinaryInfC{$(\Delta_1[\Delta],\Delta_3);\Delta_2\Ra\bot$}
            \end{prooftree}
        can be transformed into:
            \begin{prooftree}
                \AxiomC{$\Delta\Ra\alpha$}
                \AxiomC{$(\Delta_1[\alpha],\Delta_2);\Delta_3\Ra\bot$}
                \RightLabel{ $(\mrm{Cut})$}
                \BinaryInfC{$(\Delta_1[\Delta],\Delta_2);\Delta_3\Ra\bot$}
                \RightLabel{ $(\mrm{R}-\bot)$}
                \UnaryInfC{$(\Delta_1[\Delta],\Delta_3);\Delta_2\Ra\bot$}
            \end{prooftree}
        Thus the new applications of $(\mrm{Cut})$ have a lower length of its right premise. By induction hypothesis (II), the claim holds. Other cases are very similar, we omit them.
        \item[(3)] $\alpha$ is introduced in both premises.  We transform the derivation by first applying $(\mrm{Cut})$ to the premise of $(R_l)$ and $(R_r)$. After that, we apply $(\mrm{Cut})$ to the resulting sequent. Take $(R_l)=(\land\mrm{R})$, $(R_r)=(\land\mrm{L})$ and $(R_l)=(\bsl\mrm{R})$, $(R_r)=(\bsl\mrm{L})$ as examples to interpret this. The remaining cases can be treated similarly.
        \\
        $(R_l)$ is $(\land\mrm{R})$, then $\Delta=\Gamma$, $\alpha=\alpha\land\beta$. Suppose the derivation ends with:
            \begin{prooftree}
                \AxiomC{$\Ga\Ra\al$}
                \AxiomC{$\Ga\Ra\be$}
                \RightLabel{$(\land R)$}
                \BinaryInfC{$Ga\Ra\al\land\be$}
                \AxiomC{$\Theta[\al]\Ra\ga$}
                \RightLabel{$(\land L)$}
                \UnaryInfC{$\Theta[\al\land\be]\Ra\ga$}
                \RightLabel{(Cut)}
                \BinaryInfC{$\Theta[\Ga]\Ra\ga$}
            \end{prooftree}
        can be transformed into
            \begin{prooftree}
                \AxiomC{$\Ga\Ra\al$}
                \AxiomC{$\Theta[\al]\Ra\ga$}
                \RightLabel{(Cut)}
                \BinaryInfC{$\Theta[\Ga]\Ra\ga$}
            \end{prooftree}
        $(R_l)$ is $(\bsl\mrm{R})$, then $\Delta=\Gamma$, $\alpha=\alpha\bsl\beta$. Suppose the derivation ends with:
            \begin{prooftree}
                \AxiomC{$\alpha,\Gamma\Ra\beta$}
                \RightLabel{ $(\bsl\mrm{R})$}
                \UnaryInfC{$\Gamma\Ra\alpha\bsl\beta$}
                \AxiomC{$\Theta\Ra\alpha$}
                \AxiomC{$\Sigma[\beta]\Ra\gamma$}
                \RightLabel{ $(\bsl\mrm{L})$}
                \BinaryInfC{$\Sigma[\Theta,\alpha\bsl\beta]\Ra\gamma$}
                \RightLabel{ $(\mrm{Cut})$}
                \BinaryInfC{$\Sigma[\Theta,\Gamma]\Ra\gamma$}
            \end{prooftree}
        can be transformed into
                \begin{prooftree}
                \AxiomC{$\alpha,\Gamma\Ra\beta$}
                \AxiomC{$\Sigma[\beta]\Ra\gamma$}
                \RightLabel{ $(\mrm{Cut})$}
                \BinaryInfC{$\Sigma[\alpha,\Gamma]\Ra\gamma$}
                \AxiomC{$\Theta\Ra\alpha$}
                \RightLabel{ $(\mrm{Cut})$}
                \BinaryInfC{$\Sigma[\Theta,\Gamma]\Ra\gamma$}
            \end{prooftree}
    \end{itemize}
    Thus the complexity of the (Cut) formula in the first and second applications of (Cut) is lower than the original one. Hence by induction hypothesis (I), the claim holds.
    \end{proof}

    \begin{Corollary}[Subformula Property]
    If $\vdash_{\msf{Gb}}\Ga\Ra\al$, then there exists a derivation of $\Gamma\Ra\alpha$ in $\msf{Gb}$ such that all formulas appearing in the proof are subformulas of formulae appearing in $\Gamma\Ra\alpha$.
    \end{Corollary}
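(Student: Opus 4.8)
The plan is to obtain the subformula property as a routine consequence of the cut-elimination result, Theorem~\ref{ce}. First I would apply Theorem~\ref{ce} to the given derivation of $\Ga\Ra\al$, producing a derivation $\mc{D}$ of the very same sequent that contains no application of $(\mrm{Cut})$. The whole argument then reduces to a single inspection: one checks that every rule of $\msf{Gb}$ \emph{other than} $(\mrm{Cut})$ enjoys the subformula property \emph{locally}, i.e.\ every formula occurring in a premise of the rule is a subformula of some formula occurring in its conclusion.

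For the logical rules this is immediate from their shape. Each left/right introduction rule either copies formulas unchanged between premise and conclusion or replaces an immediate subformula ($\al$, $\be$, $\ga$) by the compound it builds ($\al\cdot\be$, $\al\bsl\be$, $\al\wedge\be$, $\al\vee\be$, $\al\ast\be$, $\al\ra\be$); in every such case the premise formulas lie among the subformulas of the conclusion formulas. For the structural rules $(\mrm{Ex})$, $(\mrm{Ex^;})$, $(\mrm{As_1})$, $(\mrm{As_2})$ and $(\mrm{R}-\bot)$ there is nothing to prove, since premise and conclusion contain exactly the same formula occurrences, merely rearranged within the formula structure. The one rule deserving a remark is $(\bot)$, whose premise $\Delta\Ra\bot$ carries the constant $\bot$ on the right while the conclusion $\Ga[\Delta]\Ra\al$ need not: all formulas of $\Delta$ already occur in $\Ga[\Delta]$, so the only possibly new formula introduced upward is $\bot$ itself, which we may regard as always available (equivalently, one enlarges the ambient subformula set by the single constant $\bot$).

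With this local fact in hand I would conclude by a straightforward induction on the length of the cut-free derivation $\mc{D}$. The last inference of $\mc{D}$ is an instance of some non-$(\mrm{Cut})$ rule with conclusion $\Ga\Ra\al$, and by the local property just verified its premises have all their formulas among the subformulas of formulas of $\Ga\Ra\al$ (together with $\bot$); applying the induction hypothesis to the subderivations ending in those premises shows that every formula occurring anywhere in $\mc{D}$ is a subformula of a formula of $\Ga\Ra\al$ (possibly with $\bot$). Since there is no genuine mathematical difficulty beyond Theorem~\ref{ce}, the only points requiring care — and hence the ``hard part'' — are the bookkeeping around the $(\bot)$ rule and the verification that the structural rules really introduce no new formula occurrences.
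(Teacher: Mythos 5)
Your proposal is correct and follows essentially the same route as the paper, which simply derives the corollary from Theorem~\ref{ce} by the standard induction on a cut-free derivation, using the fact that every rule of $\msf{Gb}$ other than $(\mrm{Cut})$ only involves subformulas of its conclusion. Your extra remark about the $(\bot)$ rule (the premise's succedent $\bot$ may have to be counted as always admissible) is a sensible piece of bookkeeping that the paper leaves implicit, but it does not change the argument.
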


    \begin{proof}
    It immediately follows from Theorem \ref{ce}.
    \end{proof}

    \begin{Definition}
    We define the length of a sequent $\Gamma\Ra\alpha$ as the number of formulas in it, denoted by $l(\Gamma\Ra\alpha)$.
    \end{Definition}
    
    \begin{Theorem}[Decidability]\label{decidable}
    $\msf{Gb}$ is decidable.
    \end{Theorem}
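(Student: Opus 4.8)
The plan is to reduce provability in $\msf{Gb}$ to a finite search, by combining the cut-elimination theorem with a bound on the sequents that can occur in cut-free derivations. By Theorem~\ref{ce} and the Subformula Property, $\vdash_{\msf{Gb}}\Gamma\Ra\alpha$ holds iff $\Gamma\Ra\alpha$ has a cut-free derivation, and every formula appearing in it lies in the finite set $\mrm{Sub}(\Gamma\Ra\alpha)$ of subformulas of formulas occurring in $\Gamma\Ra\alpha$. So the only obstacle to a naive search is that sequents could in principle grow arbitrarily long, and this is what must be ruled out.

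To do so I would assign to a sequent $\Delta\Ra\gamma$ the weight $m(\Delta\Ra\gamma)$ given by the total number of subformula occurrences counted over all formula occurrences in $\Delta$ and in $\gamma$ --- equivalently, the number of connective and variable occurrences in the sequent, ignoring the structural punctuation $,$ and $;$. Reading each rule of $\msf{Gb}$ other than $(\mrm{Cut})$ from conclusion to premise, $m$ never increases: it strictly decreases for every logical rule (for $({\cdot}\mrm{L})$ and $({\ast}\mrm{L})$ by exactly $1$; for $(\bsl\mrm{L})$, $(\bsl\mrm{R})$, $(\ra\mrm{L})$, $(\ra\mrm{R})$, $({\cdot}\mrm{R})$, $({\ast}\mrm{R})$ by a one-line arithmetic check on each premise, using that the context $\Gamma[-]$ is shared and that formula structures are nonempty; for the lattice rules because a formula is replaced by a proper subformula), while $(\bot)$ and the structural rules $(\mrm{Ex})$, $(\mrm{Ex^;})$, $(\mrm{As_1})$, $(\mrm{As_2})$, $(\mrm{R}-\bot)$ merely rearrange or shrink the multiset of formula occurrences and so do not increase $m$. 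Hence every sequent in a cut-free derivation of $\Gamma\Ra\alpha$ has weight at most $m(\Gamma\Ra\alpha)$; in particular $l(\Delta\Ra\gamma)\le m(\Gamma\Ra\alpha)$ for each such sequent, and all of its formulas lie in $\mrm{Sub}(\Gamma\Ra\alpha)$.

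A formula structure built from $,$ and $;$ over a multiset of at most $m(\Gamma\Ra\alpha)$ formulas drawn from the finite set $\mrm{Sub}(\Gamma\Ra\alpha)$ ranges over a finite, effectively listable collection, so the set $\mc{S}$ of sequents that can occur in a cut-free derivation of $\Gamma\Ra\alpha$ is finite and computable from $\Gamma\Ra\alpha$. If $\Gamma\Ra\alpha$ has a cut-free derivation it has one in which no sequent is repeated along any branch, since a loop can be spliced out. Therefore a bottom-up proof search which never re-visits a sequent already present on the current branch --- and which discards the one degenerate step, namely inferring $\Delta\Ra\bot$ from $\Delta\Ra\bot$ by $(\bot)$ with empty context --- generates a finitely branching tree of depth less than $|\mc{S}|$; this tree is finite and can be built effectively, and $\Gamma\Ra\alpha$ is provable iff it contains a derivation. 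This is the decision procedure.

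The main obstacle I anticipate is exactly the rules that do not strictly decrease $m$: the structural rules $(\mrm{Ex})$, $(\mrm{Ex^;})$, $(\mrm{As_1})$, $(\mrm{As_2})$, $(\mrm{R}-\bot)$ and the degenerate case of $(\bot)$. Because they re-shape a fixed multiset of formula occurrences, the weight argument by itself does not terminate the search, so one must add the loop-checking step above --- or, equivalently, work modulo the congruence generated by commutativity and associativity of $,$ and commutativity of $;$, as the Remark following the definition of $\msf{Gb}$ permits --- and then argue with some care that $\mc{S}$, and the associated search tree, are genuinely finite.
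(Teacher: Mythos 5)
Your proposal is correct and follows essentially the same route as the paper: cut-elimination plus a measure that is non-increasing when rules are read from conclusion to premise, yielding a finite, effectively searchable space of sequents. In fact your version is more careful than the paper's own argument on two points that matter: the paper's measure $l$ (the number of formulas in a sequent) actually \emph{increases} from conclusion to premise under $({\cdot}\mrm{L})$ and $({\ast}\mrm{L})$, whereas your symbol-count weight $m$ genuinely does not, and you explicitly handle the loops created by the weight-preserving structural rules $(\mrm{Ex})$, $(\mrm{Ex^;})$, $(\mrm{As_1})$, $(\mrm{As_2})$, $(\mrm{R}-\bot)$ via loop-checking, a step the paper's bare appeal to K\"{o}nig's Lemma leaves implicit.
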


    \begin{proof}
    From all rules in $\msf{Gb}$, we can see that the length of premise sequent $l(\Gamma\Ra\alpha)$ is less than or equal to that of conclusion sequent except for the (Cut) rule. Then by K\"{o}nig's Lemma (cf. \cite{konig1927schlussweise}) and Theorem \ref{ce}, in any derivation with endsequent $\Gamma\Ra\alpha$ there is all lower length of  sequents, which means the search space for $\Gamma\Ra\alpha$ is finite. Therefore, we conclude that $\msf{Gb}$ is decidable.
    \end{proof}

    \begin{Theorem}[Conservative Extension]\label{conservativextension} for any $\msf{qG}$ sequent $\alpha\Ra \beta$, 
    $\vdash_{\msf{qG}}\alpha\Ra \beta$ iff $\vdash_{\msf{Gb}}\alpha\Ra\beta$.
    \end{Theorem}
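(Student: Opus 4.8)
The plan is to prove the equivalence algebraically, bridging the two calculi through Theorem~\ref{expand}. Since in a $\msf{qG}$-sequent $\al\Ra\be$ the formulas $\al,\be$ are built only from $\cdot,\bsl,\land,\vee,\neg$, and $\neg\ga$ is read in $\msf{Gb}$ as $\ga\ra\bot$, I would establish the chain
\[
\vdash_{\msf{qG}}\al\Ra\be \iff \vDash_{qInc\mc{RL^-}}\al\Ra\be \iff \vDash_{cb\mc{RL^-}}\al\Ra\be \iff \vdash_{\msf{Gb}}\al\Ra\be .
\]
Its two outer equivalences are nothing but the soundness and completeness of $\msf{qG}$ with respect to $qInc\mc{RL^-}$ and of $\msf{Gb}$ with respect to $cb\mc{RL^-}$ (the latter being Theorem~\ref{complete2}, read for $\msf{Gb}$), so the real content is the middle equivalence $\vDash_{qInc\mc{RL^-}}\al\Ra\be \LR \vDash_{cb\mc{RL^-}}\al\Ra\be$.

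For the left-to-right half I would note that the $(\land,\vee,\cdot,\bsl,\neg)$-reduct of any $cb\mc{RL^-}$, with $\neg a := a\ra\bot$, satisfies $(\mrm{smn})$ and $(\mrm{dn1})$ by the Proposition following Definition~\ref{algebra2}, hence is a $qInc\mc{RL^-}$; since $\al,\be$ involve only these operations, any $cb\mc{RL^-}$ model refuting $\al\Ra\be$ restricts to a $qInc\mc{RL^-}$ model refuting it, so validity over $qInc\mc{RL^-}$ passes to validity over $cb\mc{RL^-}$. This already yields $\vdash_{\msf{qG}}\al\Ra\be\Rightarrow\vdash_{\msf{Gb}}\al\Ra\be$; alternatively that half can be carried out syntactically by simulating each $\msf{qG}$-rule in $\msf{Gb}$, the lattice and residuation rules being immediate once the contexts of $(\cdot\mrm{L})$, $(\cdot\mrm{R})$, $(\bsl\mrm{L})$, $(\bsl\mrm{R})$ are instantiated to $[-]$ and $(\mrm{Cut})$ is available, and the only delicate case being $(\neg)$, which is obtained from an instance of $(\mrm{R}-\bot)$ after unfolding $\neg\ga$ as $\ga\ra\bot$, pulling it into the antecedent by cutting against $\ga;(\ga\ra\bot)\Ra\bot$, reshuffling the comma/semicolon structure with $(\mrm{R}-\bot)$ and exchange, and closing with $(\cdot\mrm{L})$ and $(\ra\mrm{R})$.

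For the right-to-left half, given a $qInc\mc{RL^-}$ $\mbf{A}$ and a valuation $\mu$, I would apply Theorem~\ref{expand} to extend $\mbf{A}$ to a $cb\mc{RL^-}$ $\ddot{\mbf{A}}$ living on the same carrier, whose $(\land,\vee,\cdot,\bsl)$-reduct agrees with that of $\mbf{A}$ and in which $a\ra\bot=\neg a$ for every $a$ (clause (4) of the construction in Theorem~\ref{expand}). Then $\mu$ assigns every $\msf{qG}$-formula the same value in $\ddot{\mbf{A}}$ as in $\mbf{A}$ and the two lattice orders coincide, so if $\al\Ra\be$ is valid in every $cb\mc{RL^-}$ then in particular $\mu(\al)\le\mu(\be)$ in $\ddot{\mbf{A}}$, hence in $\mbf{A}$; together with the outer equivalences this gives $\vdash_{\msf{Gb}}\al\Ra\be\Rightarrow\vdash_{\msf{qG}}\al\Ra\be$. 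I expect the main obstacle to be not conceptual but a matter of careful bookkeeping: one has to verify that the expansion supplied by Theorem~\ref{expand} genuinely leaves the interpretation of every $\msf{qG}$-formula untouched -- in particular that its defined negation $a\ra\bot$ really coincides with the primitive $\neg$ of $\mbf{A}$, so that no occurrence of $\bot$, $\ast$, or $\ra$ ever enters the evaluation of such a formula -- and to read Theorem~\ref{complete2} as the soundness and completeness of $\msf{Gb}$ (not $\msf{qG}$) with respect to $cb\mc{RL^-}$; all the genuine work has already been done in Theorem~\ref{expand} and in the cut-elimination and soundness/completeness results for the two calculi.
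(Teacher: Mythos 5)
Your proposal is correct and follows essentially the same route as the paper: reduce both provability claims to algebraic validity via the respective soundness/completeness theorems, get one direction of the validity equivalence from the fact that every $cb\mc{RL^-}$ is (a reduct of) a $qInc\mc{RL^-}$, and get the other from the expansion construction of Theorem~\ref{expand}. Your extra care in checking that the expansion leaves the interpretation of $\msf{qG}$-formulas unchanged, and in reading Theorem~\ref{complete2} as a statement about $\msf{Gb}$ rather than $\msf{qG}$, is exactly the bookkeeping the paper's terse proof leaves implicit.
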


    \begin{proof}
    By Theorem \ref{complete}, $\vdash_{\msf{qG}}\alpha\Ra \beta$ iff  $\vDash_{qInc\mc{RL^-}} \alpha\Ra \beta$. While by Theorem \ref{complete2}, $\vdash_{\msf{Gb}}\alpha\Ra \beta$ iff  $\vDash_{cb\mc{RL^-}} \alpha\Ra \beta$. Hence one suffices to show  $\vDash_{qInc\mc{RL^-}} \alpha\Ra \beta$ iff $\vDash_{cb\mc{RL^-}} \alpha\Ra \beta$. The left to the right direction is easy since every $cb\mc{RL^-}$ is a $qInc\mc{RL^-}$. Conversely assume that  $\not\vDash_{qInc\mc{RL^-}} \alpha\Ra \beta$, then by Theorem \ref{expand}, one obtains a $cb\mc{RL^-}$ $\mc{M}$ such that $\not\vDash_{\mc{M}}\alpha\Ra \beta$. Therefore $\not\vDash_{cb\mc{RL^-}}\alpha\Ra \beta$. This completes the proof.
    \end{proof}

    \begin{Theorem}[Decidability]\label{decidable2}
    $\msf{qG}$ is decidable.
    \end{Theorem}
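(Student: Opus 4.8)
The plan is to reduce decidability of $\msf{qG}$ to the already-established decidability of $\msf{Gb}$ (Theorem \ref{decidable}) by way of the conservative-extension result (Theorem \ref{conservativextension}). First I would note that every $\msf{qG}$-sequent $\alpha\Ra\beta$ becomes, after replacing each subformula of the form $\neg\gamma$ by $\gamma\ra\bot$, literally a $\msf{Gb}$-sequent; this rewriting is plainly effective, so it costs nothing algorithmically.

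Next, Theorem \ref{conservativextension} states that for such a sequent $\vdash_{\msf{qG}}\alpha\Ra\beta$ holds if and only if $\vdash_{\msf{Gb}}\alpha\Ra\beta$. Hence a decision procedure for $\msf{qG}$ is obtained simply by composing the trivial translation above with the decision procedure for $\msf{Gb}$ supplied by Theorem \ref{decidable}: on input $\alpha\Ra\beta$ one translates, then runs cut-free proof search in $\msf{Gb}$ — which terminates because, by Theorem \ref{ce}, it suffices to search cut-free derivations, and every non-cut rule of $\msf{Gb}$ is non-length-increasing, so K\"{o}nig's Lemma bounds the search tree — and outputs its verdict.

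There is essentially no hard step here: all the real content lives in the cut-elimination theorem (Theorem \ref{ce}) and the conservativity theorem (Theorem \ref{conservativextension}). The only point deserving a moment's care is that Theorem \ref{conservativextension} is stated precisely for single-antecedent sequents $\alpha\Ra\beta$, which are exactly the sequents of $\msf{qG}$, so no detour through structured antecedents is needed. As an immediate bonus one also obtains decidability of $\msf{G}$ (InFL$^-_e$): by the embeddability Theorem \ref{thmembeddability}, $\vdash_{\msf{G}}\alpha\Ra\beta$ iff $\vdash_{\msf{qG}}ko(\alpha)\Ra ko(\beta)$, and since the Kolmogorov translation $ko$ is computable, decidability transfers from $\msf{qG}$ to $\msf{G}$.
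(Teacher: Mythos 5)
Your proposal is correct and follows essentially the same route as the paper: decidability of $\msf{qG}$ is obtained by combining the conservativity result (Theorem \ref{conservativextension}) with the decidability of $\msf{Gb}$ (Theorem \ref{decidable}), the latter resting on cut-elimination (Theorem \ref{ce}). Your additional remarks on the effectiveness of writing $\neg\gamma$ as $\gamma\ra\bot$ and on transferring decidability to $\msf{G}$ via the Kolmogorov translation merely spell out details the paper leaves implicit.
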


    \begin{proof}
    Since $\msf{qG}$ is a conservative extension of $\msf{Gb}$ by Theorem \ref{conservativextension} and $\msf{Gb}$ is decidable by Theorem \ref{decidable}, $\msf{qG}$ is decidable
    \end{proof}

    \begin{Corollary}
    $\msf{G}$ is decidable
    \end{Corollary}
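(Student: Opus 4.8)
The plan is to reduce decidability of $\msf{G}$ to that of $\msf{qG}$, which is already established in Theorem~\ref{decidable2}, by means of the embedding in Lemma~\ref{embeddability}. Given an arbitrary $\msf{G}$-sequent $\al\Ra\be$, I would first compute the Kolmogorov translations $ko(\al)$ and $ko(\be)$. By Definition~\ref{Kolmogorov}, $ko$ is specified by a straightforward structural recursion on the formula, so it is a total computable function on $\mc{F}$; each recursive step prepends at most a $\neg\neg$, so $ko(\al)$ and $ko(\be)$ have size linear in that of $\al$ and $\be$ and can be produced effectively. This yields a $\msf{qG}$-sequent $ko(\al)\Ra ko(\be)$.

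Next I would invoke Theorem~\ref{decidable2}: there is an algorithm deciding whether $\vdash_{\msf{qG}}ko(\al)\Ra ko(\be)$. Finally, by Lemma~\ref{embeddability} we have $\vdash_{\msf{qG}}ko(\al)\Ra ko(\be)$ iff $\vdash_{\msf{G}}\al\Ra\be$. Chaining these, the procedure ``compute $ko(\al),ko(\be)$; run the $\msf{qG}$-decision procedure on $ko(\al)\Ra ko(\be)$'' is a decision procedure for provability in $\msf{G}$, so $\msf{G}$ is decidable.

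There is no real obstacle at this stage: the substantive work — cut-elimination for $\msf{Gb}$ (Theorem~\ref{ce}), the conservativity of $\msf{qG}$ over $\msf{Gb}$ via the algebraic detour (Theorem~\ref{conservativextension}), the resulting decidability of $\msf{qG}$ (Theorem~\ref{decidable2}), and the embedding itself (Lemma~\ref{embeddability})—has all been carried out earlier. The only point worth stating explicitly in the proof is that $ko$ is effectively computable, which is immediate from its inductive definition, after which the corollary follows by mechanically composing these results.
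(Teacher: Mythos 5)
Your proof is correct and follows essentially the same route as the paper: reduce provability in $\msf{G}$ to provability in $\msf{qG}$ via the Kolmogorov translation (Lemma~\ref{embeddability}) and then invoke the decidability of $\msf{qG}$ (Theorem~\ref{decidable2}), with the computability of $ko$ being the only point you make explicit that the paper leaves tacit. (The paper's citation of Lemma~\ref{lemma1} in place of the embedding lemma appears to be a slip; your use of Lemma~\ref{embeddability} is the intended argument.)
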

    
    \begin{proof}
    Immediately follows from  Lemma \ref{lemma1} and Theorem \ref{decidable2}.
   \end{proof}

    \begin{Corollary}
     InFL$^-_e$ is decidable.
    \end{Corollary}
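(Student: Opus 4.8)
The plan is to read off the claim as an immediate consequence of the decidability of the sequent calculus $\msf{G}$ together with the soundness and completeness result. By Theorem~\ref{complete}, a sequent $\al\Ra\be$ is valid in all $Inc\mc{RL^-}$s — i.e. belongs to InFL$^-_e$ — exactly when $\vdash_{\msf{G}}\al\Ra\be$. So the first step is simply to reduce decidability of InFL$^-_e$ to decidability of provability in $\msf{G}$, which is the content of the Corollary immediately preceding this one.

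Next I would spell out why $\msf{G}$ is decidable, since this is where the real content sits. Given $\al\Ra\be$, one first computes the Kolmogorov translation $ko(\al)\Ra ko(\be)$; this is an effective operation on formulas, and by Lemma~\ref{embeddability} (using Lemma~\ref{lemma1}) we have $\vdash_{\msf{G}}\al\Ra\be$ iff $\vdash_{\msf{qG}}ko(\al)\Ra ko(\be)$. Then one appeals to Theorem~\ref{decidable2}: $\msf{qG}$ is a conservative extension of $\msf{Gb}$ on $\msf{qG}$-sequents (Theorem~\ref{conservativextension}), and $\msf{Gb}$ is decidable by Theorem~\ref{decidable} — its cut-elimination (Theorem~\ref{ce}) ensures that in a cut-free derivation every inference other than $(\mrm{Cut})$ leaves the length $l(\Gamma\Ra\al)$ of a sequent non-increasing, so K\"onig's Lemma confines the cut-free proof search to a finite space. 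Assembling the chain yields a terminating decision procedure: translate, run cut-free proof search in $\msf{Gb}$, and answer ``yes'' iff it succeeds.

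I do not expect any obstacle here: all the substantive work — the Kolmogorov embedding of $\msf{G}$ into $\msf{qG}$, the conservative bi-residuated extension $\msf{Gb}$, and cut-elimination for $\msf{Gb}$ — has already been established, and the present statement is purely a matter of assembling those pieces. The only point requiring a little care is the passage between ``validity in $Inc\mc{RL^-}$'' (how decidability of the \emph{logic} InFL$^-_e$ is naturally phrased) and ``provability in $\msf{G}$'', which is handled by Theorem~\ref{complete}; once that identification is made, the preceding corollary closes the argument.
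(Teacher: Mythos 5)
Your proposal is correct and follows exactly the paper's route: identify validity in $Inc\mc{RL^-}$ with provability in $\msf{G}$ via Theorem~\ref{complete}, then invoke the decidability of $\msf{G}$, which the paper obtains through the Kolmogorov translation into $\msf{qG}$ (Lemmas~\ref{lemma1} and~\ref{embeddability}), the conservativity of $\msf{Gb}$ over $\msf{qG}$ (Theorem~\ref{conservativextension}), and cut-elimination plus bounded proof search in $\msf{Gb}$ (Theorems~\ref{ce} and~\ref{decidable}). No gaps; this is essentially the same assembly the paper intends.
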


    If one omits the rule $({\ast}\mrm{L})$ and $({\ast}\mrm{R})$ and replacing $({\ra}\mrm{L})$,$({\ra}\mrm{R})$ by the following negation rules,
        \[
        \frac{ \Delta\Ra\alpha }{\Delta;\neg \alpha\Ra \bot}{(\neg \mrm{L})}
        \quad
        \frac{\alpha;\Gamma\Ra \bot}{\Gamma\Ra\neg\al}{(\neg \mrm{R})}
        \]
    then one obtains a sequent calculus for qInFL$^-_e$ denoted by ($\msf{qG_c}$). Clearly ($\neg$L) and ($\neg$R) are instance of$({\ra}\mrm{L})$,$({\ra}\mrm{R})$ respectively. Hence the cut-elimination and subformula property preserve in $\msf{qG_c}$. By adding the following two rules to the cut-free system of $\msf{qG_c}$, one obtains a cut-free system of $\msf{G}$ denoted by $\msf{G_c}$.  
        \[
        \frac{ \Gamma[ko(\alpha)]\Ra\beta }{\Gamma[\alpha]\Ra \beta}{(\mrm{koL})}
        \quad
        \frac{ \Gamma\Ra ko(\alpha) }{\Gamma\Ra \alpha}{(\mrm{koR})}
        \]

    \begin{Theorem}
    For any sequent $\alpha\Ra\beta$, if $\vdash_{\msf{G}}\alpha\Ra\beta$, then $\vdash_{\msf{G_c}} \alpha\Ra \beta$.
    \end{Theorem}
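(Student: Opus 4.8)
The plan is to route the argument through the Kolmogorov translation and the cut-free system for the quasi-involutive fragment. The rules $(\mrm{koL})$ and $(\mrm{koR})$ are exactly the syntactic devices that let one ``peel off'' a $ko$-translation, so once a sequent of the shape $ko(\al)\Ra ko(\be)$ has a cut-free derivation, a single application of each of the two $ko$-rules produces $\al\Ra\be$, still without cut.

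Concretely, I would proceed as follows. Assume $\vdash_{\msf{G}}\al\Ra\be$. By Lemma \ref{embeddability} we immediately obtain $\vdash_{\msf{qG}}ko(\al)\Ra ko(\be)$. Next I would show that $\msf{qG_c}$ proves every $\msf{qG}$-theorem over single-formula antecedents: each rule of $\msf{qG}$ is simulated inside $\msf{qG_c}$, namely $(\mrm{RES}\bsl)$ and $(\mrm{RES^-}\bsl)$ from $(\cdot\mrm{L})$, $(\bsl\mrm{R})$ and their converses obtained via $(\mrm{Cut})$; the rule $(\neg)$ (including its null case) from $(\neg\mrm{L})$, $(\neg\mrm{R})$, $(\mrm{R}-\bot)$ and the structural rules; and the lattice rules directly. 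Equivalently this step can be carried out semantically, by proving soundness and completeness of $\msf{qG_c}$ with respect to $qInc\mc{RL^-}$ in the style of Theorem \ref{complete2} and combining it with completeness of $\msf{qG}$. Either way, $\vdash_{\msf{qG_c}}ko(\al)\Ra ko(\be)$. Since $(\neg\mrm{L})$ and $(\neg\mrm{R})$ are instances of $(\ra\mrm{L})$ and $(\ra\mrm{R})$, the cut-elimination argument of Theorem \ref{ce} carries over to $\msf{qG_c}$, so this sequent has a cut-free $\msf{qG_c}$-derivation; and since every cut-free rule of $\msf{qG_c}$ is a rule of $\msf{G_c}$, this is also a cut-free $\msf{G_c}$-derivation. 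Finally, one application of $(\mrm{koL})$ with the trivial context $[-]$ turns $ko(\al)\Ra ko(\be)$ into $\al\Ra ko(\be)$, and one application of $(\mrm{koR})$ turns this into $\al\Ra\be$; hence $\vdash_{\msf{G_c}}\al\Ra\be$ without any use of $(\mrm{Cut})$.

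The main obstacle is the middle step: verifying that the structure-based calculus $\msf{qG_c}$ proves exactly the $\msf{qG}$-theorems and admits cut-elimination. The delicate part is the interaction of $(\mrm{Cut})$ with $(\mrm{R}-\bot)$ and with the $(\bot)$-rule when the cut formula is $\bot$ or sits inside a $;$-context; here one must replay the case analysis of Theorem \ref{ce} and check that dropping $(\ast\mrm{L})$, $(\ast\mrm{R})$ and replacing $(\ra\mrm{L})$, $(\ra\mrm{R})$ by $(\neg\mrm{L})$, $(\neg\mrm{R})$ breaks none of the reductions. A secondary, purely bookkeeping point is to read every $\neg\ga$ occurring in a $ko$-image as the abbreviation $\ga\ra\bot$, so that the output of $ko$ genuinely lives in the language of $\msf{qG_c}$.
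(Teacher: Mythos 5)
Your argument is correct and follows the paper's backbone: pass from $\vdash_{\msf{G}}\al\Ra\be$ to $\vdash_{\msf{qG}}ko(\al)\Ra ko(\be)$ by Lemma \ref{embeddability}, land in the cut-free structural calculus, and finish with one application each of $(\mrm{koL})$ and $(\mrm{koR})$. The difference lies in the middle transfer. The paper routes through $\msf{Gb}$: it uses Theorem \ref{conservativextension} to get $\vdash_{\msf{Gb}}ko(\al)\Ra ko(\be)$ and then passes to $\msf{qG_c}$, implicitly relying on cut-elimination and the subformula property of $\msf{Gb}$ (the endsequent contains no $\ast$ and every $\ra$ in it has the form $\ga\ra\bot$, so a cut-free $\msf{Gb}$-proof stays in the $\neg/\bot$-fragment and its $(\ra\mrm{L})$, $(\ra\mrm{R})$ steps are instances of $(\neg\mrm{L})$, $(\neg\mrm{R})$). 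You bypass $\msf{Gb}$ entirely: you simulate the rules of $\msf{qG}$ inside $\msf{qG_c}$ with $(\mrm{Cut})$ --- your reductions of $(\mrm{RES}\bsl)$, $(\mrm{RES^-}\bsl)$ and $(\neg)$ to $({\cdot}\mrm{L})$, $(\bsl\mrm{L})$, $(\bsl\mrm{R})$, $(\neg\mrm{L})$, $(\neg\mrm{R})$, $(\mrm{R}-\bot)$ and exchange do go through --- and then eliminate cuts by replaying Theorem \ref{ce} for $\msf{qG_c}$ itself. This makes explicit and self-contained the step the paper states only tersely (``the cut-elimination and subformula property preserve in $\msf{qG_c}$''), at the cost of redoing the cut-elimination case analysis for the negation rules and $(\mrm{R}-\bot)$, which you rightly flag as the delicate part; the paper's route reuses Theorems \ref{complete2}, \ref{ce} and \ref{conservativextension} wholesale but leaves the extraction of a $\msf{qG_c}$-derivation from a $\msf{Gb}$-derivation implicit. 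One caveat on your proposed semantic alternative: completeness of $\msf{qG_c}$ with respect to $qInc\mc{RL^-}$ is not literally available, since the language of $\msf{qG_c}$ contains $\bot$ and the structural $;$, so that variant would have to be stated over bounded algebras or the appropriate fragment of $cb\mc{RL^-}$; your syntactic simulation avoids this and is the cleaner choice. Your bookkeeping remark that every $\neg\ga$ in a $ko$-image is the abbreviation $\ga\ra\bot$ matches the paper's convention, and your conclusion is in fact slightly stronger than the statement: you obtain a cut-free $\msf{G_c}$-derivation of $\al\Ra\be$.
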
 
    
    \begin{proof}
    Let $\vdash_{\msf{G}} \alpha\Ra \beta$. Then $\vdash_{\msf{qG}}ko(\alpha)\Ra ko(\beta)$. Thus $\vdash_{\msf{Gb}} ko(\alpha)\Ra ko(\beta)$. Hence  $\vdash_{\msf{qG_c}} ko(\alpha)\Ra ko(\beta)$. Therefore $\vdash_{\msf{G_c}} ko(\alpha)\Ra ko(\beta)$. Consequently by ($\mrm{ko}$L) and  ($\mrm{ko}$R), $\vdash_{\msf{G_c}} \alpha\Ra \beta$.
    \end{proof}
    
    Although an additional structural operator $;$ is contained in system $\msf{G_c}$, $\msf{G_c}$ can be viewed as a cut-free sequent calculus of InFL$^-_e$ with $\top,\bot$. This is because any InFL$^-_e$ valid sequent is provable in $\msf{G_c}$. This idea is inspired by the cut-free sequent calculus for product-free Lambek calculus. 

%%%%%%%%%%%%%%%%%%%%%%%%%%%%%%%%%%%%%%%%%%
\section{Conclusion and Future Work}

    We study the commutative residuated lattice-ordered semigroup with involutive negation. Simple sequent calculi for the algebras are discussed. The soundness and completeness of the logic InFL$^-_e$ are proved. A standard double negation translation result is established. Further, the decidability results of the corresponding logics are studied. Finally, we design a kind of cut-free sequent calculus for InFL$^-_e$ enriched with constant $\top, \bot$. Our results can be extended to minimal negation under commutative residuated lattice-ordered semigroup. A further interesting question is that can we adapt these results to commutative residuated lattice-ordered semigroup with De morgan negations.

\section*{Acknowledgments}

TBA.

\section*{Declarations}

\begin{itemize}
\item Funding: This research was funded by basic scientific research business cost project of central universities, grant number 2072021107.
\item Conflict of interest: The authors declare no conflict of interest.
\item Ethics approval: Not applicable.
\item Consent to participate: Not applicable.
\item Consent for publication: Not applicable.
\item Availability of data and materials: Not applicable.
\item Code availability: Not applicable.
\item Authors' contributions: TBA.
\end{itemize}

%%===================================================%%
%% For presentation purpose, we have included        %%
%% \bigskip command. please ignore this.             %%
%%===================================================%%

%%=============================================%%
%% For submissions to Nature Portfolio Journals %%
%% please use the heading ``Extended Data''.   %%
%%=============================================%%

%%=============================================================%%
%% Sample for another appendix section			       %%
%%=============================================================%%

%% \section{Example of another appendix section}\label{secA2}%
%% Appendices may be used for helpful, supporting or essential material that would otherwise 
%% clutter, break up or be distracting to the text. Appendices can consist of sections, figures, 
%% tables and equations etc.

%%===========================================================================================%%
%% If you are submitting to one of the Nature Portfolio journals, using the eJP submission   %%
%% system, please include the references within the manuscript file itself. You may do this  %%
%% by copying the reference list from your .bbl file, paste it into the main manuscript .tex %%
%% file, and delete the associated \verb+\bibliography+ commands.                            %%
%%===========================================================================================%%

\bibliography{sn-bibliography}% common bib file
%% if required, the content of .bbl file can be included here once bbl is generated
%%\input sn-article.bbl

%% Default %%
%%\input sn-sample-bib.tex%

\end{document}